\documentclass[12pt,a4paper,reqno]{amsart}

\usepackage[utf8]{inputenc}
\usepackage[british]{babel}
\usepackage{amsmath,amsthm,amssymb}
\usepackage{hyperref,geometry}
\usepackage[dvipsnames]{xcolor}
\usepackage{enumitem}
\usepackage[abbrev, msc-links, nobysame]{amsrefs}
\usepackage{thm-restate}
\usepackage[nameinlink]{cleveref}
\usepackage{tikz}

\geometry{left=30mm ,right=30mm, top=25mm, bottom=25mm, marginparwidth=25mm}
\linespread{1.19}

\def\N{\mathbb N}
\def\Z{\mathbb Z}

\def\F{\mathcal F}

\hypersetup{
	colorlinks,
	linkcolor={red!60!black},
	citecolor={green!60!black},
	urlcolor={blue!60!black},
}

\colorlet{DRayFinitePhaseColour}{blue}
\colorlet{DRayFirstCrossingColour}{OliveGreen}
\colorlet{DRaySecondCrossingColour}{Dandelion}
\colorlet{DRayThirdCrossingColour}{Mahogany}

\newcommand{\braces}[1]{\lbrace #1 \rbrace}

\newcommand{\E}{\mathcal{E}}
\newcommand{\tribe}{\mathcal{F}}
\newcommand{\colex}{\text{col}}

%%% Theorem environtments
\newtheorem{theorem}{Theorem}[section]
\newtheorem{lemma}[theorem]{Lemma}

% not italicised
\theoremstyle{definition}

\newtheorem{problem}[theorem]{Problem}
% environment name not bold
\theoremstyle{remark}
\newtheorem{claim}{Claim}

%%% Define claimproof environment
\def\lqedsymbol{\ifmmode$\lrcorner$\else{\unskip\nobreak\hfil
		\penalty50\hskip1em\null\nobreak\hfil$\rule{1.2ex}{1.2ex}$
		\parfillskip=0pt\finalhyphendemerits=0\endgraf}\fi} 

\newenvironment{claimproof}[1][\proofname]
{%
	\proof[#1]%
}
{%
	\endproof%
}

%%% Make sure claims are numbered within proof environments
\makeatletter
\@addtoreset{claim}{theorem}
\makeatother

\crefname{definition}{definition}{definitions}
\crefformat{definition}{#2Definition~#1#3}
\Crefformat{definition}{#2Definition~#1#3}

\crefname{section}{section}{sections}
\crefformat{section}{#2Section~#1#3}
\Crefformat{section}{#2Section~#1#3}

\crefname{subsection}{subsection}{subsections}
\AtBeginDocument{\crefformat{subsection}{#2Subsection~#1#3}}
\Crefformat{subsection}{#2Subsection~#1#3}

\crefname{lemma}{lemma}{lemmata}
\crefformat{lemma}{#2Lemma~#1#3}
\Crefformat{lemma}{#2Lemma~#1#3}

\crefname{remark}{remark}{remarks}
\crefformat{remark}{#2Remark~#1#3}
\Crefformat{remark}{#2Remark~#1#3}

\crefname{theorem}{theorem}{theorems}
\crefformat{theorem}{#2Theorem~#1#3}
\Crefformat{theorem}{#2Theorem~#1#3}

\crefname{corollary}{corollary}{corollaries}
\crefformat{corollary}{#2Corollary~#1#3}
\Crefformat{corollary}{#2Corollary~#1#3}

\crefname{figure}{figure}{figures}
\crefformat{figure}{#2Figure~#1#3}
\Crefformat{figure}{#2Figure~#1#3}

\crefname{proposition}{proposition}{propositions}
\crefformat{proposition}{#2Proposition~#1#3}
\Crefformat{proposition}{#2Proposition~#1#3}

\crefname{observation}{observation}{observations}
\crefformat{observation}{#2Observation~#1#3}
\Crefformat{observation}{#2Observation~#1#3}

\crefname{claim}{claim}{claims}
\crefformat{claim}{#2Claim~#1#3}
\Crefformat{claim}{#2Claim~#1#3}

\crefname{problem}{problem}{problem}
\crefformat{problem}{#2Problem~#1#3}
\Crefformat{problem}{#2Problem~#1#3}

\crefname{question}{question}{question}
\crefformat{question}{#2Question~#1#3}
\Crefformat{question}{#2Question~#1#3}

\title{On the ubiquity of oriented double rays}
\author{Florian Gut
	\and
	Thilo Krill
	\and
	Florian Reich}

\address{Universit\"at Hamburg, Department of Mathematics, Bundesstrasse 55 (Geomatikum), 20146 Hamburg, Germany}
\email{\{florian.gut, thilo.krill, florian.reich\}@uni-hamburg.de}
\keywords{ubiquity, directed graph, digraph, ray, double ray, $G$-tribes}

\begin{document}

\begin{abstract}
A digraph $H$ is called \emph{ubiquitous} if every digraph that contains arbitrarily many vertex-disjoint copies of $H$ also contains infinitely many vertex-disjoint copies of $H$.
We study oriented double rays, that is, digraphs $H$ whose underlying undirected graphs are double rays.
Calling a vertex of an oriented double ray a turn if it has in-degree or out-degree 2, we prove that an oriented double ray with at least one turn is ubiquitous if and only if it has a (finite) odd number of turns.
It remains an open problem to determine whether the consistently oriented double ray is ubiquitous.
\end{abstract}

\maketitle

\section{Introduction}
A (di)graph $H$ is called \emph{ubiquitous} if any (di)graph $G$ that contains $k$ disjoint copies of $H$ for every $k \in \N$ also contains infinitely many disjoint copies of $H$ as sub(di)graphs.
While it is evident that every finite (di)graph is ubiquitous, the question whether an infinite (di)graph is ubiquitous is surprisingly challenging.
The study of ubiquity was sparked by the result of Halin stating that the ray is ubiquitous~\cite{halin1965}.
Subsequently, Halin~\cite{halin1970} also demonstrated the ubiquity of the double ray (see also Bowler, Carmesin and Pott~\cite{bowler2013}).
Moreover, one can consider notions of ubiquity with respect to containment relations other than the subgraph relation.
For results on minor-ubiquity, see Andreae~\cites{Andreae2002, Andreae2013} or Bowler, Elbracht, Erde, Gollin, Heuer, Pitz and Teegen~\cites{BEEGHPT22, BEEGHPT18, BEEGHPT20}.

The present authors initiated the investigation of ubiquity in digraphs \cite{GKR22} by examining the class of digraphs corresponding to rays in Halin's landmark result \cite{halin1965}.
More precisely, digraphs whose underlying undirected graphs are rays were considered, which are called \emph{oriented rays}.
In this context, a \emph{turn} is a vertex of in-degree or out-degree $2$.
The main result of \cite{GKR22} is:
\begin{theorem}[\cite{GKR22}*{Theorem 1.1}]\label{theo:ray_main_theorem}
An oriented ray is ubiquitous if and only if it has a finite number of turns.
\end{theorem}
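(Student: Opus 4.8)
To prove \cref{theo:ray_main_theorem} we treat the two implications separately. Throughout we use the elementary observation that reversing every edge of a digraph preserves the maximum number of pairwise disjoint copies of any fixed subdigraph, so a digraph is ubiquitous if and only if its reverse is; hence we may freely replace $R$ by its reverse. We also phrase the argument in the language of $G$-tribes: a \emph{$G$-tribe in $H$} records the hypothesis that $H$ contains $k$ pairwise disjoint copies of $G$ for every $k\in\N$, and ``$G$ is ubiquitous'' means precisely that every $G$-tribe refines to an infinite family of pairwise disjoint copies of $G$. Finally, recall that an oriented ray has finitely many turns exactly when, from some vertex on, all its edges point the same way, i.e.\ it is eventually a directed ray.

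\emph{Finitely many turns $\Rightarrow$ ubiquitous.} We proceed by strong induction on the number $s$ of turns of $R$. If $s=0$ then, after reversing if necessary, $R$ is the directed ray $v_0\to v_1\to v_2\to\cdots$, and we must show that a digraph with $k$ pairwise disjoint directed rays for every $k$ has infinitely many; this is the directed analogue of Halin's theorem~\cite{halin1965}, proved by following Halin's original argument with the applications of Menger's theorem replaced by its directed version. For $s\ge 1$, let $u$ be the first turn of $R$ and split $R=P\cdot R'$ at $u$, where $P$ is the finite directed path from the origin of $R$ to $u$ (it has no turn, since $u$ is the first) and $R'$ is the oriented ray starting at $u$, so that $R'$ has $s-1$ turns, $V(P)\cap V(R')=\{u\}$, and at $u$ the final edge of $P$ and the first edge of $R'$ disagree.

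Now let $G$ contain a $G$-tribe of copies of $R$. Since $R'\subseteq R$, it also contains a $G$-tribe of copies of $R'$, so by the induction hypothesis $G$ has an infinite family $\F$ of pairwise disjoint copies of $R'$. To finish we must attach, to infinitely many members of $\F$, pairwise disjoint copies of the finite directed path $P$, each ending at the vertex playing $u$ and disjoint both from one another and from the members of $\F$ that are not being extended. The raw material is the $R$-tribe: one studies how a large finite set of pairwise disjoint copies of $R$ interacts with $\F$, and shows that either their $P$-parts can be rerouted to lie disjointly alongside members of $\F$ (yielding the required extensions directly), or the copies of $R$ together with $\F$ concentrate on a common finite vertex set, which one then exploits, via pigeonhole and directed rerouting, to build the infinite family of copies of $R$ explicitly. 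This rerouting-and-bookkeeping step --- carried out so as to respect all orientation constraints --- is the technical heart of the argument and is expected to be the main obstacle.

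\emph{Infinitely many turns $\Rightarrow$ not ubiquitous.} By contraposition we must exhibit, for each oriented ray $R$ with infinitely many turns, a host digraph $G$ that contains $k$ pairwise disjoint copies of $R$ for every $k$ but no infinite family of them. The plan is to build $G$ from the turn pattern of $R$: decompose $R=D_0D_1D_2\cdots$ into its maximal directed subpaths, glued at the turns, which alternate between sources and sinks, and then assemble countably many ``shifted'' copies of these blocks around a bounded branching gadget in such a way that, because $R$ has infinitely many turns, every copy of $R$ in $G$ is forced to re-enter the gadget infinitely often in an alternating source/sink pattern, while for each fixed $k$ one can still route $k$ pairwise disjoint copies of $R$ through $k$ disjoint ``lanes''. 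Pinning down the gadget and its connections, and verifying the two packing properties, is the substance of this direction; finding the right construction is itself delicate, although --- given the rigidity imposed by the orientation --- the verification should be comparatively routine once the construction is fixed.
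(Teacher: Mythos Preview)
This theorem is not proved in the present paper; it is quoted from \cite{GKR22} and used only as a black box (in the proof of \cref{theo:dray_negative_non_periodic}). There is therefore no proof here against which to compare your proposal.

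That said, what you have written is an outline in which both substantive steps are explicitly deferred, and in the positive direction the outline itself is doubtful. You apply the induction hypothesis to the tail $R'$ to obtain infinitely many disjoint copies of $R'$, and then hope to attach copies of the finite initial segment $P$ to infinitely many of them. This order of operations is the problem: the induction hypothesis gives you no control over \emph{which} copies of $R'$ you obtain, and in particular no reason why the vertices playing the role of $u$ should admit pairwise disjoint $P$-attachments, or indeed any $P$-attachment at all. Your sentence ``this rerouting-and-bookkeeping step \dots\ is the technical heart of the argument and is expected to be the main obstacle'' is precisely where the proof would have to be, and it is not clear that such a rerouting exists. The strategy that actually works---and which the present paper mirrors for double rays in \cref{theo:double_rays_positive}---runs in the opposite order: first make the copies of the \emph{entire} finite part $\hat R$ containing all turns pairwise disjoint across the whole tribe (\cref{lem:disjoint_initial_segments}), and only then prove a rooted ubiquity statement for the single remaining out-ray tail (the ray analogue of \cref{theo:ubiquity_with_starting_set2}). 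Splitting off one turn at a time does not obviously reduce to this, and your base case also hides real work: the directed version of Halin's theorem is not a one-line transcription of the undirected proof.

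In the negative direction you describe the intended flavour of a counterexample but provide neither the construction nor its verification; ``finding the right construction is itself delicate'' is accurate, but it is also the entire content of that implication.
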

In this paper we extend the investigation of ubiquity in digraphs to the class of digraphs whose underlying undirected graphs are double rays.
We call these digraphs \emph{oriented double rays} and tackle the following problem posed in \cite{GKR22}:
\begin{problem}\label{prob:are_oriented_double_rays_ubiquitous}
Which oriented double rays are ubiquitous?
\end{problem}
It turns out that -- in contrast to oriented rays -- not only is it relevant whether the number of turns is finite or infinite, but also the parity of this number plays a role.
The main result of this paper reads as follows:

\begin{restatable}{theorem}{DoubleRayUbiquity}\label{theo:dray_main_theorem}
An oriented double ray with at least one turn is ubiquitous if and only if it has a (finite) odd number of turns.
\end{restatable}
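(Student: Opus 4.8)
The plan is to prove the two directions of the equivalence separately, after a normalisation. Passing to the reverse digraph (reversing every edge) preserves ubiquity and sends an oriented double ray to an oriented double ray with the same number of turns, so we may replace $D$ by its reverse whenever convenient. Along an oriented double ray the edge-direction is constant on the stretches between consecutive turns and flips at each turn; hence, if $D$ has a finite number $t$ of turns, its two tails are consistently oriented, with equal edge-directions when $t$ is even and opposite edge-directions when $t$ is odd. Up to reversal this means that for $t$ odd, $D$ is \emph{divergent}, i.e.\ $D = R_L \cup M \cup R_R$ with $R_L, R_R$ directed rays running to $-\infty$ and $+\infty$ and $M$ a finite oriented path carrying all $t$ turns, whereas for even $t \ge 2$, $D = L \cup M \cup R$ with $L$ a directed ray entering the finite turn-path $M$ and $R$ a directed ray leaving it. We show that the divergent case is ubiquitous, and that the remaining cases — $t$ even and at least $2$, and $t$ infinite — are not.

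For the ubiquity of a divergent $D$ with $t$ odd, fix a vertex $v_0$ far out along $R_R$ and split $D = P' \cup P''$, where $P''$ is the directed sub-ray of $R_R$ starting at $v_0$, $P'$ is the oriented ray consisting of everything of $D$ up to and including $v_0$, and $P' \cap P'' = \braces{v_0}$, with $v_0$ an interior vertex of $P'$ and the initial vertex of $P''$; then $P'$ has exactly $t$ turns, so $D$ is two oriented rays with finitely many turns glued at a single vertex. Now assume a digraph $G$ contains $k$ disjoint copies of $D$ for every $k$ but only finitely many disjoint copies altogether, and derive a contradiction by adapting the $G$-tribe method used by Halin for the undirected double ray and by the present authors for oriented rays in \cite{GKR22}. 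Two inputs enter: from the $P'$- and $P''$-parts of the given copies, $G$ contains, for every $k$, $k$ disjoint copies of the oriented ray $P'$ (which has $t$ turns) and, separately, $k$ disjoint copies of the directed ray $P''$, hence — by \cref{theo:ray_main_theorem}, since $t$ and $0$ are finite — infinitely many disjoint copies of each; and within any copy of $D$ these two rays already meet exactly as required to reassemble $D$. The real work is to upgrade an infinite disjoint family of such ``half-copies'' into an infinite disjoint family of copies of $D$: one runs the usual tribe dichotomy — either some thick $D$-tribe avoids every finite vertex set, and then a greedy argument extracts an infinite disjoint family of copies of $D$ directly; or a finite set $S$ blocks a thick sub-tribe, and by tracking how the bounded piece $M$ and the two rays of each copy must meet $S$ one produces a thick sub-tribe strongly concentrated at a single vertex, which is pumped to a contradiction as in \cite{GKR22}. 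This upgrading step is the crux of the whole theorem: it is where ``arbitrarily many'' must be turned into ``infinitely many'', and it is not eased by any induction on the number of turns, since an oriented double ray with $t \ge 3$ turns contains no proper oriented double ray as a subdigraph, so $M$ has to be treated directly and uniformly in $t$.

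For non-ubiquity we exhibit, for each relevant $D$, a witnessing digraph. When $t$ is infinite, at least one tail of $D$ is an oriented ray $R$ with infinitely many turns, which is non-ubiquitous by \cref{theo:ray_main_theorem}; starting from a digraph $G_0$ witnessing this for $R$, we attach a carefully controlled ``reservoir'' supplying the finite remainder $D \setminus R$ to each copy of $R$ occurring in the finite families inside $G_0$, arranging the attachment so that no infinite disjoint family of copies of $D$ is created — checking that ``arbitrarily many'' survives while ``infinitely many'' still fails is the delicate point and follows the corresponding argument in \cite{GKR22}. When $t \ge 2$ is even, we use that any copy of $D$ in a digraph must traverse the entire finite turn-path $M$ from its incoming directed ray to its outgoing directed ray, and we build the witnessing digraph on a rooted tree-like scaffold, in the spirit of the non-ubiquity constructions of \cite{GKR22} and Andreae~\cites{Andreae2002, Andreae2013}, in which these traversals can be realised in laminar families of every finite size but never in an infinite pairwise-disjoint family; once more, the two-sided verification is where the effort lies.
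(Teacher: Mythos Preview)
Your proposal has genuine gaps in all three non-ubiquity cases and in the ubiquity case.

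For the infinite-turns case, you call $D \setminus R$ a ``finite remainder''; this is false, since the complement of a tail of a double ray is the other tail, hence infinite. More importantly, even after attaching an infinite reservoir $S_x$ at each vertex $x$ of $G_0$, you must verify that every copy of $D$ in the resulting digraph has a tail isomorphic to $R$ lying inside $G_0$; otherwise the non-ubiquity of $R$ in $G_0$ buys nothing. This is precisely the crux, and it fails in general: a copy $\tilde R$ of $D$ may have one tail equal to some $S_x$ and the other tail inside $G_0$, but that second tail need not be isomorphic to $R$. The paper handles this by first proving a structural result (\cref{thm:periodic_dray}) for \emph{non-periodic} $D$, guaranteeing a vertex $\hat v$ such that whenever $R\hat v$ embeds as a tail of a copy $\tilde R$, the complementary tail of $\tilde R$ is forced to be isomorphic to $\hat v R$; this is what pins the correct tail into $G_0$. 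For \emph{periodic} $D$ this structural result fails outright, and a completely different construction (\cref{theo:dray_negative_periodic}) is needed. Your proposal neither distinguishes these two cases nor addresses the tail-placement issue.

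For the odd-turns case, your decomposition $D = P' \cup P''$ with $P'$ an oriented ray carrying all $t$ turns and $P''$ a directed ray is less convenient than the paper's: the paper removes the interior of a finite path $\hat R$ containing all turns, leaving a disjoint union $U$ of two \emph{out-rays}, forks the tribe at $\hat R$ via \cref{lem:disjoint_initial_segments}, and then invokes a single lemma (\cref{theo:ubiquity_with_starting_set2}) asserting that pairs of out-rays are ubiquitous even when their root-pairs are constrained to a prescribed set $X$; reassembly is then immediate. Your ``upgrading'' paragraph, by contrast, gestures at a tribe dichotomy but does not say what ``strongly concentrated at a single vertex'' means for a double ray, nor how concentration at one vertex yields infinitely many disjoint double rays; as written this is not a proof, and observing separately that $G$ contains infinitely many disjoint copies of $P'$ and of $P''$ does not help, since there is no mechanism to match them up.

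For the even-turns case, ``tree-like scaffold'' and ``laminar families'' do not constitute a construction; the paper gives an explicit digraph (\cref{theo:dray_even_number_of_turns}) indexed by $\{(n,m):n\le m\}$ in colexicographic order, and verifying that no infinite disjoint family exists requires two separate claims governing how out-rays and in-rays can lie in it.
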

	
\cref{theo:dray_main_theorem} solves \cref{prob:are_oriented_double_rays_ubiquitous}, except for the case of the oriented double ray without any turns.
This raises the following problem:

\begin{problem}[\cite{GKR22}*{Problem 1.3}]
Is the oriented double ray without turns ubiquitous?
\end{problem}
	
The proof of \Cref{theo:dray_main_theorem} is constructive, employing results from \cite{GKR22} and investigating symmetry properties of oriented double rays.
	
We begin by proving in \cref{sec:dray_negative_results} that $R$ is non-ubiquitous if it has an even but non-zero or infinite number of turns:
In \cref{subsec:dray_negative_even_number_of_turns}, we address the case where $R$ has an even but non-zero number of turns.
If $R$ has infinitely many turns, we distinguish whether $R$ is non-periodic (\cref{subsec:dray_negative_non-periodic}) or periodic (\cref{subsec:dray_negative_periodic}) (periodicity is defined in \cref{sec:dray_preliminaries}).
Finally, in \cref{sec:dray_positive_result} we show that any oriented double ray with an odd number of turns is ubiquitous.

\section{Preliminaries}\label{sec:dray_preliminaries}

For introductions to graph theory and digraph theory we refer to the books of Diestel~\cite{D17} and Bang-Jensen and Gutin~\cite{bang2008}, respectively.
Any definitions not introduced here can be found in these books.

For digraphs $D$ and $D'$, we write $D \cong D'$ if $D$ is isomorphic to $D'$ and $D \leq D'$ if $D$ is isomorphic to a subdigraph of $D'$.

\subsection{Rays and double rays}

In this paper, rays and double rays are digraphs together with linear orders on their sets of vertices: A \emph{ray} or \emph{double ray} is a digraph $R$ together with a linear order $\leq_R$ on $V(R)$ isomorphic to $\N$ or $\Z$, respectively, such that for all vertices $v, w$ of $R$:
\begin{itemize}
\item if $v$ and $w$ are consecutive in the linear order, then either $vw \in A(R)$ or $wv \in A(R)$ but not both, and
\item if $v$ and $w$ are not consecutive in the linear order, then $vw, wv \notin A(R)$.
\end{itemize}
Hence, rays and double rays are oriented counterparts of undirected rays and double rays, together with linear orders.

We say that a (double) ray is a \emph{subdigraph} of another (double) ray or is \emph{isomorphic} to another (double) ray if this is true for the digraphs in the usual sense, regardless of the linear orders.

Let $R$ be a ray or a double ray and let $v \in V(R)$.
We write $vR$ for the subdigraph of $R$ induced by all vertices $w$ of $R$ with $w \geq_R v$ and $Rv$ for the subdigraph of $R$ induced by all vertices $w$ of $R$ with $w \leq_R v$.
An infinite subdigraph of this form is called a \emph{tail} of $R$.
We call $R$ \emph{periodic} if $R$ has a non-trivial $\leq_R$-preserving endomorphism.
(Note that when $R$ is a ray, any endomorphism must preserve $\leq_R$.
However, the same is not true for double rays.)
Let $v \in V(R)$ and let $f$ be a non-trivial $\leq_R$-preserving endomorphism of $R$ such that the distance $d$ between $v$ and $f(v)$ in $R$ is minimal.
Then we say that $R$ has \emph{periodicity} $d$ (and $d$ is independent of the choices of $v$ and $f$).

We call the unique $\leq_R$-minimal vertex of a ray $R$ the \emph{root} of $R$. A vertex of a ray or double ray which is incident with two outgoing or two incoming arcs is called a \emph{turn}.
A maximal (possibly infinite) directed path contained in a ray or double ray, i.e.\ a maximal connected subdigraph whose arcs are consistently oriented, is called a \emph{phase}.
A ray without turns is an \emph{in-ray} if the first arc points towards the root and an \emph{out-ray} otherwise.

\subsection{Tribes}

Corresponding to \cite{BEEGHPT22}*{Definition 5.1} and \cite{GKR22}, given two digraphs $D$ and $R$, we call a collection $\tribe$ of finite sets of disjoint copies of $R$ in $D$ an \emph{$R$-tribe in $D$}.
If $D$ is clear from the context, we omit the host digraph $D$.
For an $R$-tribe $\tribe$, we call any element of $\bigcup \tribe$ a \emph{member of $\tribe$}.
We say that $\tribe$ is \emph{thick} if for each natural number $n$ there is $F \in \tribe$ with at least $n$ elements.
Note that if $D$ contains arbitrarily many disjoint copies of $R$ if and only if $D$ contains a thick $R$-tribe.

In this paper, whenever we consider a copy $R'$ of a digraph $R$ we implicitly fix an isomorphism $\varphi\colon R \to R'$ and for a subdigraph $\hat{R} \subseteq R$ we write in short $\hat{R}'$ for $\varphi(\hat{R})$.
With this, we say that an $R$-tribe $\tribe$ is \emph{forked at} $\hat{R}$ if $\hat{R}' \cap {R}'' = \emptyset$ for any two distinct members $R', R''$ of $\tribe$.

\section{Negative results}\label{sec:dray_negative_results}

In this \namecref{sec:dray_negative_results} we prove the backwards implication of \cref{theo:dray_main_theorem}, which is divided into three different parts.
First we show that double rays with a (finite) even, non-zero number of turns are non-ubiquitous in \cref{theo:dray_even_number_of_turns}.
Then we show that non-periodic double rays with infinitely many turns are non-ubiquitous in \cref{theo:dray_negative_non_periodic}, and lastly that periodic double rays with infinitely many turns are non-ubiquitous in \cref{theo:dray_negative_periodic}. 

\subsection{Double rays with an even, non-zero number of turns}\label{subsec:dray_negative_even_number_of_turns}
Any double ray $R$ with an even, non-zero number of turns contains an in-ray and an out-ray. By glueing together in- and out-rays of the members of a thick $R$-tribe in a specific way we can show:
\begin{theorem}\label{theo:dray_even_number_of_turns}
Any double ray with an even, non-zero number of turns is non-ubiquitous.
\end{theorem}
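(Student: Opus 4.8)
The plan is to construct, for a given double ray $R$ with an even, non-zero number of turns, a host digraph $D$ that contains a thick $R$-tribe but only finitely many disjoint copies of $R$. The starting observation is that since $R$ has a positive even number of turns, its two ends are ``oriented the same way'' in the sense that exactly one of its two tails is an in-ray and the other is an out-ray (with finitely many turns in between); equivalently, $R$ decomposes as $R_{\mathrm{in}} \cdot B \cdot R_{\mathrm{out}}$ where $B$ is a finite oriented path carrying all the turns, $R_{\mathrm{in}}$ is an in-ray and $R_{\mathrm{out}}$ is an out-ray. The idea is that in $D$ we will be able to realise many disjoint copies of $R$ only by re-using shared infrastructure, and a simple counting/parity argument will then cap the number of genuinely disjoint copies.

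First I would build $D$ as follows. Take $k$ disjoint ``core'' copies $B_1, \dots, B_k$ of the middle block $B$ for a suitable finite $k$ (in fact one can let $k$ grow, producing one digraph $D_k$ for each $k$, or splice the $D_k$ together along a common ray — I would take the cleaner route of a single $D$ built from countably many layers). Now attach a shared ``reservoir'' of rays: a single out-ray $S_{\mathrm{out}}$ and a single in-ray $S_{\mathrm{in}}$, together with, for each $i$, connecting arcs that let the out-tail of $B_i$ be continued into $S_{\mathrm{out}}$ and the in-tail of $B_i$ be continued from $S_{\mathrm{in}}$. The key design feature is that $S_{\mathrm{out}}$ and $S_{\mathrm{in}}$ should only contain enough vertices (arranged along their linear orders) to feed a bounded number of the $B_i$ simultaneously as disjoint rays; the point of glueing in- and out-rays of distinct members together, as the excerpt hints, is that a single long directed ray in $D$ can serve as the out-ray of one prospective copy of $R$ and, read backwards past a turn, be forced to interfere with the in-ray of another, so that the copies cannot all be made disjoint past a certain number. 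Showing $D$ contains a thick $R$-tribe is then the easy direction: for each $n$ one exhibits $n$ disjoint copies of $R$, each using its own $B_i$ and a disjoint stretch of the reservoir rays, which is possible because any finite set of copies only uses a finite portion of $S_{\mathrm{in}}, S_{\mathrm{out}}$.

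The harder direction — and the main obstacle — is proving that $D$ does \emph{not} contain infinitely many disjoint copies of $R$. Here one argues by contradiction: suppose $\{R^{(j)} : j \in \N\}$ are pairwise disjoint copies of $R$ in $D$. Each $R^{(j)}$, being a double ray, has two ends, each of which must be an infinite directed tail that eventually lies in the reservoir part of $D$ (since the cores $B_i$ are finite and there are only finitely many of them, only finitely many copies can have a turn inside a given $B_i$, and outside the cores $D$ is essentially a controlled union of the two reservoir rays). I would then analyse how the tails of the $R^{(j)}$ embed into $S_{\mathrm{out}}$ and $S_{\mathrm{in}}$: the crucial point is that an out-ray tail must map to a forward-directed tail of $S_{\mathrm{out}}$ and an in-ray tail to a backward-directed tail of $S_{\mathrm{in}}$, and the linear orders force these tails to be ``cofinal'' in the reservoir rays, so that any two copies must share all but finitely many vertices of at least one reservoir ray — contradicting disjointness once we have more than finitely many copies. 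The delicate bookkeeping is in choosing the connecting arcs between the $B_i$ and the reservoir so that (a) enough disjoint copies exist to make the tribe thick, yet (b) the orientation/parity constraint genuinely prevents an infinite disjoint family; I expect this is where the even-but-nonzero hypothesis is really used, since with an odd number of turns both tails would be in-rays (or both out-rays), changing which reservoir rays get shared and breaking the obstruction. I would also need a small lemma ruling out ``degenerate'' copies of $R$ that somehow avoid the reservoir entirely — but since $D$ minus the reservoir is a finite union of finite cores, no copy of an infinite $R$ can do that.

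Finally, I would assemble these pieces: construct $D$ explicitly with the layered reservoir, verify thickness by the finite-portion argument, and then run the disjointness-impossibility argument above to conclude that $D$ witnesses non-ubiquity of $R$. The write-up would isolate the structural decomposition $R = R_{\mathrm{in}} \cdot B \cdot R_{\mathrm{out}}$ as a preliminary observation, state the construction of $D$ as a definition, prove thickness as one short claim, and prove the bound on disjoint copies as a second, longer claim where the parity hypothesis enters.
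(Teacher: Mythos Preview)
Your proposal has a genuine gap at the construction step. You propose a single reservoir out-ray $S_{\mathrm{out}}$ and a single reservoir in-ray $S_{\mathrm{in}}$, with each core $B_i$ feeding into them; you then claim that for each $n$ one can find $n$ disjoint copies of $R$ because ``any finite set of copies only uses a finite portion of $S_{\mathrm{in}}, S_{\mathrm{out}}$''. That is false: the out-tail of every copy of $R$ is itself an \emph{infinite} out-ray, so once it enters $S_{\mathrm{out}}$ it must occupy a cofinal segment of it. Two cofinal segments of a single ray always intersect, so your $D$ cannot contain even two disjoint copies of $R$, let alone a thick tribe. The same objection applies to $S_{\mathrm{in}}$. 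This is not a bookkeeping issue that the ``layered'' variant fixes without further ideas: any design in which all prospective out-tails are funnelled into one directed ray kills thickness outright.

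What the paper actually does is more delicate. It takes a family $(R(n,m))_{n\le m}$ of full disjoint copies of $R$ and, for every pair with $m\ne m'$, identifies one vertex on the in-ray of one copy with one vertex on the out-ray of the other, the identification points being spaced farther apart than the longest finite phase of $R$ and laid out in a colexicographic grid. The $m$ copies $R(1,m),\dots,R(m,m)$ remain pairwise disjoint (no identifications within a fixed $m$), which gives thickness. For the upper bound, one shows (i) every out-ray in $D$ eventually agrees with a tail $xR(n,m)$ of some fixed copy, and (ii) every in-ray in $D$ rooted at a first turn $s(n',m')$ must visit the set $\{v^{(n,m)}_{(i,j)}:(i,j)\in I\}$ for \emph{every} $(n,m)$. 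Since all but finitely many of those identification points lie on $xR(n,m)$, any second copy of $R$ is forced to meet the first. The point you are missing is precisely this two-parameter indexing: one coordinate controls disjointness within a level, the other forces intersections across levels, and the colexicographic placement of identification points is what makes claims (i) and (ii) go through.
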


\begin{proof}
Let $R$ be a double ray with an even, non-zero number of turns. Let $s$ be the first and $t$ be the last turn of $R$.
Since the number of turns is even, exactly one of $R s$ and $t R$ is an in-ray and exactly one an out-ray.
By possibly reversing the order $\leq_R$, we may assume that the former is an in-ray and the latter an out-ray.
Let $p \in \N$ be the length of the longest finite phase of $R$.
We will construct a digraph $D$ containing arbitrarily many but not infinitely many disjoint copies of $R$.
	
For the construction of $D$, we use the auxiliary set
\[
I:=\{(n,m) \in \N^2 \colon n \leq m\}
\]
ordered by the colexicographic order $\leq_\colex$.
For $(n,m) \in I$, we write $(n,m)^+$ for the successor of $(n,m)$ under $\leq_\colex$ and, if there exists a predecessor of $(n,m)$ under $\leq_\colex$, we refer to this predecessor as $(n,m)^-$.

Let $(R(n,m))_{(n,m)\in I}$ be a family of pairwise disjoint copies of $R$.
For $(n, m) \in I$, write $s(n,m)$ for the first turn of $R(n,m)$ and $t(n,m)$ for the last turn of $R(n,m)$.

Next, we define two families of vertices of $R(n,m)$ for every $(n, m) \in I$:
Let $(v_{(n,m)}^{(i,j)})_{(i,j) \in I}$ be a family of vertices of $R(n,m) s(n,m)$ such that 
\begin{itemize}
\item the order $\leq_{R(n,m)}$ and the order induced by $\leq_\colex$ on the superindices are reversed on $\{v_{(n,m)}^{(i,j)}: (i,j) \in I\}$,
\item the vertices of $(v_{(n,m)}^{(i,j)})_{(i,j) \in I}$ have distance at least $p + 1$ to each other and to $s(n,m)$ in $R(n,m)$.
\end{itemize}
Let $(w_{(n,m)}^{(i,j)})_{(i,j) \in I}$ be a family of vertices of $t(n,m) R(n,m)$ such that 
\begin{itemize}
\item the order $\leq_{R(n,m)}$ and the order induced by $\leq_\colex$ on the superindices coincide on $\{w_{(n,m)}^{(i,j)}: (i,j) \in I\}$,
\item the vertices of $(w_{(n,m)}^{(i,j)})_{(i,j) \in I}$ have distance at least $p + 1$ to each other and to $t(n,m)$ in $R(n,m)$.
\end{itemize}

Let $D$ be the digraph constructed from the disjoint union
$\bigsqcup_{(n,m) \in I} R(n,m)$
by identifying the two vertices $v^{(k, \ell)}_{(i,j)}$ and $w^{(i, j)}_{(k, \ell)}$ for any $(i, j), (k, \ell) \in I$ with $j \neq \ell$ (see \cref{fig:dray_even_number_of_turns}).
We simply refer to the vertex of $D$ that is obtained by identification of $v^{(k, \ell)}_{(i,j)}$ and $w^{(i, j)}_{(k, \ell)}$ as $v^{(k, \ell)}_{(i,j)}$.

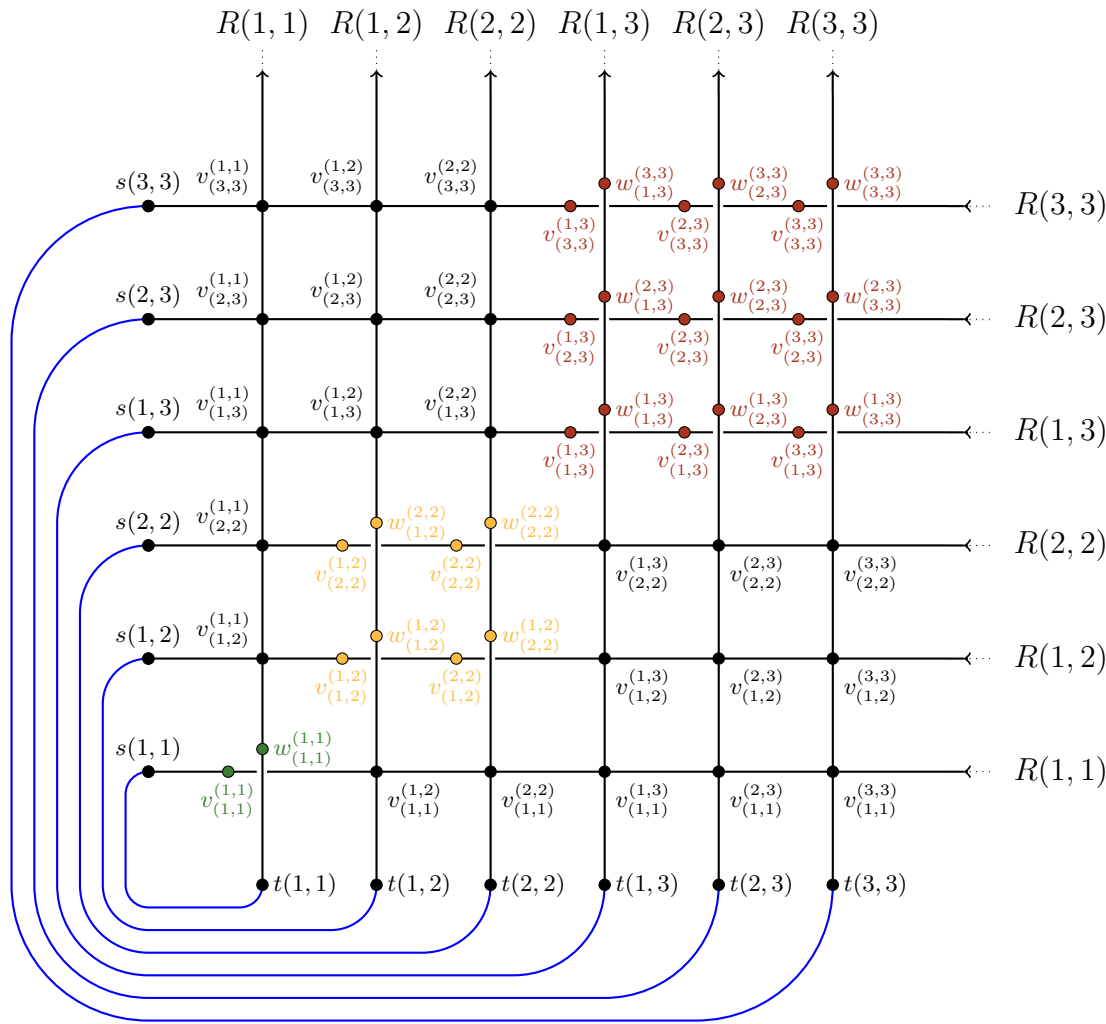
\begin{figure}[ht]
\centering
\begin{tikzpicture}[scale=1.5]

%%%%% Initialisation of the coordinates used for the picture
\foreach \x in {0,1,2,3,4,5,6,7} \foreach \y in {0,1,2,3,4,5,6,7} \node[shape=coordinate] (\x\y) at (\x,\y) {};

%%%%% Drawing the rays
%%% in- and out-ray segments
\foreach \x in {1,2,3,4,5,6} \draw[thick] (7\x) to (0\x);
\foreach \level/\start/\finish in {1/.95/1.04,2/1.95/2.04,2/2.95/3.04,3/1.95/2.04,3/2.95/3.04,4/3.95/4.04,4/4.95/5.04,4/5.95/6.04,5/3.95/4.04,5/4.95/5.04,5/5.95/6.04,6/3.95/4.04,6/4.95/5.04,6/5.95/6.04} \draw[ultra thick, white] (\start,\level) to (\finish,\level);
\foreach \y in {1,2,3,4,5,6} \draw[thick] (\y 0) to (\y 7);
%%% Arrows at borders
\foreach \x in {1,2,3,4,5,6} \draw[thick,->] (\x,7) to (\x,7.2);
\foreach \x in {1,2,3,4,5,6} \draw[dotted] (\x,7.2) to (\x,7.4);
\foreach \y in {1,2,3,4,5,6} \draw[thick,->] (7.4,\y) to (7.15,\y);
\foreach \y in {1,2,3,4,5,6} \draw[thick] (7.15,\y) to (7,\y);
\foreach \y in {1,2,3,4,5,6} \draw[very thick,white] (7.187,\y) to (7.41,\y);
\foreach \y in {1,2,3,4,5,6} \draw[dotted] (7.2,\y) to (7.4,\y);
%%% Finite phase segments
\draw[thick,DRayFinitePhaseColour] (01) to [out=180,in=90] (-.2,0.8) to (-.2,0) to [out=270,in=180] (0,-.2) to (0.8,-.2) to [out=0,in=270] (10);
\draw[thick,DRayFinitePhaseColour] (02) to [out=180,in=90] (-.4,1.6) to (-.4,0) to [out=270,in=180] (0,-.4) to (1.6,-.4) to [out=0,in=270] (20);
\draw[thick,DRayFinitePhaseColour] (03) to [out=180,in=90] (-.6,2.4) to (-.6,0) to [out=270,in=180] (0,-.6) to (2.4,-.6) to [out=0,in=270] (30);
\draw[thick,DRayFinitePhaseColour] (04) to [out=180,in=90] (-.8,3.2) to (-.8,0) to [out=270,in=180] (0,-.8) to (3.2,-.8) to [out=0,in=270] (40);
\draw[thick,DRayFinitePhaseColour] (05) to [out=180,in=90] (-1,4) to (-1,0) to [out=270,in=180] (0,-1) to (4,-1) to [out=0,in=270] (50);
\draw[thick,DRayFinitePhaseColour] (06) to [out=180,in=90] (-1.2,4.8) to (-1.2,0) to [out=270,in=180] (0,-1.2) to (4.8,-1.2) to [out=0,in=270] (60);

%%%%% Adding vertices
%%% Coloured vertices
\foreach \x/\y in {.7/1,1/1.2} \draw[fill=DRayFirstCrossingColour] (\x,\y) circle [radius=.05];
\foreach \z in {2,3} \foreach \x/\y in {1.7/\z,2/\z.2,2.7/\z,3/\z.2} \draw[fill=DRaySecondCrossingColour] (\x,\y) circle [radius=.05];
\foreach \z in {4,5,6} \foreach \x/\y in {3.7/\z,4/\z.2,4.7/\z,5/\z.2,5.7/\z,6/\z.2} \draw[fill=DRayThirdCrossingColour] (\x,\y) circle [radius=.05];
%%% Black vertices
\foreach \x in {1,2,3} \foreach \y in {4,5,6} \draw[fill] (\x\y) circle [radius=.05];
\foreach \x in {4,5,6} \foreach \y in {1,2,3} \draw[fill] (\x\y) circle [radius=.05];
\foreach \x in {1,2,3,4,5,6} \draw[fill] (\x 0) circle [radius=.05];
\foreach \y in {1,2,3,4,5,6} \draw[fill] (0\y) circle [radius=.05];
\foreach \x/\y in {1/2,1/3,2/1,3/1} \draw[fill] (\x\y) circle [radius=.05];
\foreach \y in {1,2,3,4,5,6} \draw[fill] (0\y) circle [radius=.05];

%%%%% Adding the labels
%%% rays
\foreach \x/\y/\z in {1/1/1,2/1/2,3/2/2,4/1/3,5/2/3,6/3/3} \node at (\x,7.6) {$R(\y,\z)$};
\foreach \x/\y/\z in {1/1/1,2/1/2,3/2/2,4/1/3,5/2/3,6/3/3} \node at (8,\x) {$R(\y,\z)$};
%%% t(n,m) and s(n,m)
\foreach \x/\y/\z in {1/1/1,2/1/2,3/2/2,4/1/3,5/2/3,6/3/3} \node[right] at (\x 0) {\scriptsize{$t(\y,\z)$}};
\foreach \x/\y/\z in {1/1/1,2/1/2,3/2/2,4/1/3,5/2/3,6/3/3} \node[above] at (0\x) {\scriptsize{$s(\y,\z)$}};
%%% Vertices that get identified
\foreach \xpos/\ypos/\firsttopname/\secondtopname/\firstbottomname/\secondbottomname in {2/1/1/2/1/1,3/1/2/2/1/1,4/1/1/3/1/1,5/1/2/3/1/1,6/1/3/3/1/1,4/2/1/3/1/2,5/2/2/3/1/2,6/2/3/3/1/2,4/3/1/3/2/2,5/3/2/3/2/2,6/3/3/3/2/2} \node[below right] at (\xpos,\ypos) {\scriptsize{$v^{(\firsttopname,\secondtopname)}_{(\firstbottomname,\secondbottomname)}$}};
\foreach \xpos/\ypos/\firsttopname/\secondtopname/\firstbottomname/\secondbottomname in {1/2/1/1/1/2,1/3/1/1/2/2,1/4/1/1/1/3,1/5/1/1/2/3,1/6/1/1/3/3,2/4/1/2/1/3,2/5/1/2/2/3,2/6/1/2/3/3,3/4/2/2/1/3,3/5/2/2/2/3,3/6/2/2/3/3} \node[above left] at (\xpos,\ypos) {\scriptsize{$v^{(\firsttopname,\secondtopname)}_{(\firstbottomname,\secondbottomname)}$}};
%%% Vertices that dont get identified
\foreach \x/\y/\pos/\name in {.7/1/below/v,1/1.2/right/w} \node[\pos,DRayFirstCrossingColour] at (\x,\y) {\scriptsize{$\name^{(1,1)}_{(1,1)}$}};
\foreach \labelplace/\xpos/\ypos/\nodename/\firsttopname/\secondtopname/\firstbottomname/\secondbottomname in {below/1.7/2/v/1/2/1/2,right/2/2.2/w/1/2/1/2,below/1.7/3/v/1/2/2/2,right/2/3.2/w/2/2/1/2,below/2.7/2/v/2/2/1/2,right/3/2.2/w/1/2/2/2,below/2.7/3/v/2/2/2/2,right/3/3.2/w/2/2/2/2} \node[\labelplace,DRaySecondCrossingColour] at (\xpos,\ypos) {\scriptsize{$\nodename^{(\firsttopname,\secondtopname)}_{(\firstbottomname,\secondbottomname)}$}};
\foreach \labelplace/\xpos/\ypos/\nodename/\firsttopname/\secondtopname/\firstbottomname/\secondbottomname in {below/3.7/4/v/1/3/1/3,right/4/4.2/w/1/3/1/3,below/4.7/4/v/2/3/1/3,right/5/4.2/w/1/3/2/3,below/5.7/4/v/3/3/1/3,right/6/4.2/w/1/3/3/3,below/3.7/5/v/1/3/2/3,right/4/5.2/w/2/3/1/3,below/4.7/5/v/2/3/2/3,right/5/5.2/w/2/3/2/3,below/5.7/5/v/3/3/2/3,right/6/5.2/w/2/3/3/3,below/3.7/6/v/1/3/3/3,right/4/6.2/w/3/3/1/3,below/4.7/6/v/2/3/3/3,right/5/6.2/w/3/3/2/3,below/5.7/6/v/3/3/3/3,right/6/6.2/w/3/3/3/3} \node[\labelplace,DRayThirdCrossingColour] at (\xpos,\ypos) {\scriptsize{$\nodename^{(\firsttopname,\secondtopname)}_{(\firstbottomname,\secondbottomname)}$}};
\end{tikzpicture}

\caption{The digraph $D$ constructed in the proof of \cref{theo:dray_even_number_of_turns}. The horizontal paths represent parts of the in-rays $R(n,m)s(n,m)$ and the vertical paths represent parts of the out-rays $t(n,m)R(n,m)$. The blue paths connecting the vertices $s(n,m)$ and $t(n,m)$ represent the finite paths $s(n,m)R(n,m)t(n,m)$ that consist of the union of all finite phases of $R(n,m)$.}
\label{fig:dray_even_number_of_turns}
\end{figure}

The digraph $D$ contains $m$ disjoint copies of $R$ for every $m \in \N$, as the double rays $R(1, m), \dots, R(m,m)$ are disjoint.
Thus it remains to prove that $D$ does not contain infinitely many disjoint copies of $R$.
In \cref{clm:out-ray}, we investigate how an out-ray can lie in $D$, and in \cref{clm:in-ray}, we investigate how an in-ray with a fixed root can lie in $D$.
After that, we deduce from the two claims and from the fact that $R$ contains both an out-ray and an in-ray, that $D$ does not contain infinitely many copies of $R$.

\begin{claim}\label{clm:out-ray}
Any out-ray in $D$ has a tail that coincides with $x R(n,m)$ for some $(n, m) \in I$ and some $x \in V(R(n,m))$.
\end{claim}

\begin{claimproof}
Let $S$ be an arbitrary out-ray in $D$.
If $S$ is completely contained in some $R(n,m)$, then we are done.

Thus we can assume that $S$ is not completely contained in some $R(n,m)$.
This implies that $S$ must contain an identification vertex, i.e. $v^{(k,\ell)}_{(i,j)} \in V(S)$ for some $(i,j), (k, \ell) \in I$.
Let $(n,m) \in I$ be $\leq_\colex$-minimal with the property that $S$ contains a vertex $v^{(n,m)}_{(i,j)}$ for some $(i,j) \in I$.
We show that $v^{(n,m)}_{(i,j)} S$ is a tail of $t(n,m) R(n,m)$.

Suppose for a contradiction that $v^{(n,m)}_{(i,j)} S$ is not a tail of $t(n,m) R(n,m)$.
Set $v := v^{(n,m)}_{(i,j)}$ if $v^{(n,m)}_{(i,j)} \notin V(t(n,m) R(n,m))$ (i.e.\ if $j = m$), and otherwise let $v$ be the last vertex of $v^{(n,m)}_{(i,j)} S$ such that $v^{(n,m)}_{(i,j)} S v$ is contained in the out-ray $t(n,m) R(n,m)$.
In either case, there is $(k, \ell) \in I$ such that $v = v^{(n,m)}_{(k,\ell)}$ since in the latter case $v$ had to be identified with another vertex in the construction.
Then the first arc of $v S$ is contained in $R(k, \ell)$.
Since $S$ is an out-ray, it also contains $v^{(n,m)^-}_{(k,\ell)}$ if $(n,m) \neq (1,1)$, or $s(k,\ell)$ if $(n,m) = (1,1)$ (see \cref{fig:dray_even_number_of_turns}).
This contradicts either the minimality of $(n,m)$ or the fact that $s(k,\ell)$ is a turn, respectively.
\end{claimproof}

\begin{claim} \label{clm:in-ray}
For any $(n', m'), (n, m) \in I$, any in-ray in $D$ with root $s(n',m')$ contains a vertex of $\{ v^{(n,m)}_{(i, j)}: (i, j) \in I \}$.
\end{claim}

\begin{claimproof}
Let $(n,m), (n', m') \in I$ and let $S$ be an arbitrary in-ray in $D$ with root $s(n', m')$.
As no vertex of $s(n', m') S(n', m') t(n',m')$ has been identified with other vertices in the construction of $D$ and $t(n', m')$ is a turn, $S$ contains the vertex $v^{(1,1)}_{(n', m')}$.

We consider the set $\mathcal{X}:= \{ v^{(k, \ell)}_{(i, j)} \in V(S): (i,j), (k,\ell) \in I , (k, \ell) \leq_\colex (n,m) \}$ and let $(k, \ell) \in I$ be $\leq_\colex$-maximal with the property that an element of $\mathcal{X}$ has superindex $(k,\ell)$.
We show $(k,\ell) = (n, m)$, which implies the statement of \cref{clm:in-ray}.

Suppose for a contradiction that $(k,\ell) <_\colex (n, m)$.
Let $(i, j) \in I$ be $\leq_\colex$-minimal with the property that $v^{(k,\ell)}_{(i, j)}$ is an element of $\mathcal{X}$.
The first arc of $v^{(k,\ell)}_{(i, j)} S$ lies either in $R(i,j)s(i,j)$ or in $t(k,\ell)R(k,\ell)$.
In the former case, the in-ray $S$ must also contain $v^{(k,\ell)^+}_{(i, j)}$ (see \cref{fig:dray_even_number_of_turns}), contradicting the maximality of $(k,\ell)$.
In the latter case, $S$ also contains $t(k,\ell)$ if $v^{(k,\ell)}_{(i, j)}$ is the first vertex of $t(k, \ell) R(k, \ell)$ that has been identified, or $S$ contains a vertex $v^{(k,\ell)}_{(i', j')}$ for $(i',j') <_\colex (i,j)$ otherwise (see \cref{fig:dray_even_number_of_turns}).
This contradicts either the fact that $t(k,\ell)$ is a turn or the minimality of $(i,j)$, respectively.
\end{claimproof}
		
Let $\hat R$ be an arbitrary copy of $R$ in $D$.
By \cref{clm:out-ray}, there is $(n,m) \in I$ and $x \in V(R(n,m))$ such that an out-ray in $\hat R$ coincides with $x R(n,m)$.
To prove that $D$ cannot contain infinitely many disjoint copies of $R$, it suffices to show that any copy of $R$ in $D$ has an in-ray that starts in some $s(n',m')$. Then by \Cref{clm:in-ray}, every copy of $R$ in $D$ contains a vertex of $\{v^{(n,m)}_{(i,j)}: (i,j) \in I\}$, contradicting that the set $\{v^{(n,m)}_{(i,j)}: (i,j) \in I\} \setminus V(x R(n,m))$ is finite.

Let $\tilde{R}$ be any copy of $R$ in $D$ and let $\tilde{R} y$ be the unique phase of $\tilde{R}$ that forms an in-ray.
By construction of $D$, any phase of length at most $p$ of $\tilde{R}$ is contained in $s(i,j) R(i,j) t(i,j)$ for some $(i,j) \in I$, which immediately yields $y \in \braces{s(n',m'), t(n',m')}$ for some $(n',m')\in I$.
Since $\tilde{R} y$ is an in-ray, we must have $y = s(n', m')$ as desired. This completes the proof.
\end{proof}

\subsection{Non-periodic double rays with infinitely many turns}\label{subsec:dray_negative_non-periodic}
In \cref{lem:non-periodic_subray}, \cref{lem:periodic_dray,thm:periodic_dray} we investigate symmetry properties of non-periodic double rays and show that any such double ray $R$ has a tail which is isomorphic to only very specific other tails of $R$.
In \cref{theo:dray_negative_non_periodic}, we use this result to reduce the non-ubiquity of non-periodic double rays with infinitely many turns to the non-ubiquity of rays with infinitely many turns (\cref{theo:ray_main_theorem}).

\begin{lemma}\label{lem:non-periodic_subray}
For every non-periodic double ray $R$ there exists $v^* \in V(R)$ such that $R v^*$ is non-periodic.
\end{lemma}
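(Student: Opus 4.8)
I want to show that some tail $Rv^*$ of a non-periodic double ray $R$ is itself non-periodic. The natural approach is to argue by contraposition: suppose that \emph{every} tail $Rv$ of $R$ is periodic, and derive that $R$ itself is periodic.

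\medskip

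\emph{Setting up the machinery.} For each vertex $v$, the tail $Rv$ is a ray, so by the remark in \cref{sec:dray_preliminaries} any endomorphism of $Rv$ is automatically $\leq_{Rv}$-preserving; thus ``$Rv$ is periodic'' simply means $Rv$ has a non-trivial endomorphism, and this endomorphism must shift the root $v$ strictly backwards along $Rv$, i.e.\ to some $v'<_R v$ (into the ``earlier'' part of $R$). The key observation is that such an endomorphism $g\colon Rv\to Rv$ is an embedding of $Rv$ as a sub-ray of itself, hence it is entirely determined by combinatorial data: it maps the root to some vertex and then extends ray-by-ray, and because the underlying undirected graph of a ray has no automorphisms beyond the identity, $g$ is actually the \emph{identity map} from $Rv$ onto the sub-ray $Rv'$ (re-indexed). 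In other words, periodicity of the ray $Rv$ is equivalent to the existence of $v' <_R v$ with $Rv \cong Rv'$ \emph{via the obvious order-matching isomorphism} — equivalently, $R$ ``repeats a block'' ending at $v$. So the hypothesis that all tails are periodic says: for every $v$ there is some $v' <_R v$ such that the finite segment of $R$ between $v'$ and $v$ ``can be deleted'' without changing $Rv$ up to the canonical isomorphism.

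\medskip

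\emph{From all tails periodic to $R$ periodic.} Fix any base vertex $v_0$ and build a strictly $\leq_R$-decreasing sequence $v_0 >_R v_1 >_R v_2 >_R \cdots$ where each $v_{k+1}$ witnesses periodicity of $Rv_k$: so $Rv_k\cong Rv_{k+1}$ canonically for each $k$. Composing, $Rv_0\cong Rv_k$ for all $k$, and since the $v_k$ march off to $-\infty$ in $\leq_R$, the whole double ray $R$ is the ``limit'' of these tails: $R=\bigcup_k Rv_k$ up to reversing, so I need to manufacture a non-trivial $\leq_R$-preserving endomorphism of $R$ itself from this tower. The cleanest way: let $d_k$ be the distance in $R$ from $v_{k+1}$ to $v_k$ (a positive integer). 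I want to find a single $d>0$ such that $R$ has a $\leq_R$-preserving endomorphism moving $v_0$ back by $d$. The canonical isomorphism $Rv_0\to Rv_1$ shifts everything on the tail $Rv_0$ forward by $d_0$; the issue is whether this shift extends \emph{leftwards} past $v_0$ to all of $R$. This is where a pigeonhole/compactness step enters: because the portion of $R$ to the left of $v_0$ is itself covered by the tails $Rv_k$ (reading $R$ in the opposite direction), and each $Rv_k\cong Rv_{k+1}$, the finite configuration of arcs in any bounded window recurs infinitely often, and a standard König's-lemma argument produces a consistent nontrivial shift of all of $\Z$, i.e.\ a non-trivial $\leq_R$-preserving endomorphism of $R$, contradicting non-periodicity of $R$.

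\medskip

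\emph{Main obstacle.} The delicate point is exactly the last step: ``$Rv$ periodic for all $v$'' only gives isomorphisms of \emph{tails}, which a priori act trivially far out to the right but need not glue into a self-map of the \emph{two-way-infinite} $R$; a double ray can have all of its forward tails periodic while still having no nontrivial $\leq_R$-preserving endomorphism if the leftward structure is genuinely aperiodic. So I expect the real content to be showing that ``all forward tails periodic'' forces the leftward structure to be eventually periodic as well — likely via a compactness/pigeonhole argument on the finitely many isomorphism types of windows, or by directly exhibiting two vertices $a<_R b$ with $b$ arbitrarily far left of $v_0$ such that the arc-pattern and the isomorphism type of $Ra$ and $Rb$ agree, then taking the induced shift. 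If the framework in later sections (e.g.\ \cref{lem:periodic_dray}, \cref{thm:periodic_dray}) supplies a characterisation of periodic double rays in terms of periodic tails, that result could replace this hand-crafted argument entirely; absent that, the pigeonhole-on-windows route is the one I would carry out in full.
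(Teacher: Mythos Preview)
Your contrapositive strategy is the right one and matches the paper's, but the argument as written has a genuine gap rooted in a directional error.

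You build a strictly $\leq_R$-decreasing sequence $v_0 >_R v_1 >_R \cdots$. Since $Rv = \{w : w \leq_R v\}$, this gives a \emph{nested decreasing} family $Rv_0 \supsetneq Rv_1 \supsetneq \cdots$, so $\bigcup_k Rv_k = Rv_0$, not $R$. All the information extracted from this tower lives inside the single tail $Rv_0$, and the isomorphisms $Rv_0 \cong Rv_k$ merely restate that $Rv_0$ is periodic --- which was already the hypothesis. Nothing you have built touches the right half $v_0 R$, so no endomorphism of all of $R$ can emerge from this construction. (Your sentence ``extends leftwards past $v_0$'' should read ``rightwards''.)

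To cover $R$ you would need an \emph{increasing} sequence $v_0 <_R v_1 <_R \cdots$ with $v_k \to +\infty$, using that each $Rv_k$ is periodic with some period $p_k$. But then the periods $p_k$ are non-decreasing in $k$ (a shift valid on the larger tail restricts to the smaller one), and your K\"onig/pigeonhole sketch would need them bounded. Your suggested justification --- that finite windows recur infinitely often --- is strictly weaker than periodicity (Sturmian sequences are recurrent but aperiodic), so that route does not close the gap.

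The paper sidesteps all of this with one clean observation: let $p$ be the \emph{minimum} periodicity among all tails $Rw$, attained at some $Rv$. For $u <_R v$, the ray $Ru$ is a tail of $Rv$ and inherits the shift-by-$p$, hence has periodicity at most $p$; by minimality, exactly $p$. For $w >_R v$, iterate any non-trivial endomorphism of $Rw$ until its image lands inside $Rv$; this exhibits $Rw \cong Ru$ for some $u <_R v$, so $Rw$ also has periodicity $p$. Now every tail has the same period $p$, and the global shift-by-$p$ is a $\leq_R$-preserving endomorphism of $R$ --- no compactness required.
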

	
\begin{proof}
Suppose for a contradiction that $R w$ is periodic for all $w \in V(R)$.
Let $p \in \N$ be minimal among the periodicities of $Rw$ for all $w \in V(R)$ and let $v \in V(R)$ such that $Rv$ has periodicity $p$.
We show that $Rw$ has periodicity $p$ for any $w \in V(R)$.
This then implies that $R$ is periodic with periodicity $p$, a contradiction.

For $w >_R v$, let $f$ be any non-trivial endomorphism of $Rw$, which exists since $Rw$ is periodic.
By concatenating $f$ with itself multiple times if necessary, we obtain an endomorphism of $Rw$ whose image is contained in $Rv$.
Thus it remains to prove that $Ru$ has periodicity $p$ for any $u <_R v$.
Since $Ru$ is a tail of $Rv$, $Ru$ has periodicity at most $p$, and by minimality of $p$, $Ru$ has periodicity exactly $p$.
\end{proof}

\begin{lemma} \label{lem:periodic_dray}
For every non-periodic double ray $R$ there exists $v^* \in V(R)$ such that for all $v \geq_R v^*$ and all $w \in V(R): $
\[
Rv \cong Rw \Rightarrow v = w .
\]
\end{lemma}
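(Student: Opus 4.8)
The plan is to combine \cref{lem:non-periodic_subray} with a careful analysis of what an isomorphism $Rv \cong Rw$ can look like when $R$ is non-periodic. First I would apply \cref{lem:non-periodic_subray} to obtain a vertex $u^* \in V(R)$ such that $Ru^*$ is non-periodic; the candidate $v^*$ will be $u^*$ (or possibly a further tail of it, see below). The key point to establish is: if $v, w \geq_R u^*$ and $\psi\colon Rv \to Rw$ is an isomorphism of double rays, then $\psi$ preserves the linear order (after identifying $\leq_{Rv}$ with the restriction of $\leq_R$, and similarly for $w$), i.e. $\psi$ is $\leq_R$-preserving on its domain. Indeed, an isomorphism of double rays either preserves or reverses the underlying linear order; in the order-reversing case $\psi$ maps the $\leq_R$-minimal vertex $v$ of $Rv$ to the $\leq_R$-maximal "vertex" of $Rw$, which is impossible since $Rw$ has no $\leq_R$-maximal vertex. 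Hence $\psi$ is order-preserving, which forces $\psi(v) = w$.

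The remaining case is when $w <_R u^*$. Here $Rw \supseteq Ru^*$ is a proper extension of a non-periodic tail, and $Rv$ with $v \geq_R u^*$ is a tail of $Ru^*$. Suppose $Rv \cong Rw$ via some isomorphism $\psi$. As before $\psi$ must be order-preserving (the reversing case is excluded by the same argument — no maximal vertex), so $\psi(v) = w$, and then $\psi$ restricts to an order-preserving isomorphism $Ru^* \to R\psi(u^*)$ with $\psi(u^*) \geq_R w$. If $\psi(u^*) = u^*$ we are forced into $v = w$ again; the only genuinely new possibility is $\psi(u^*) >_R u^*$ strictly, giving a strictly $\leq_R$-increasing, order-preserving self-embedding of $Ru^*$. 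But iterating such an embedding produces a non-trivial $\leq_R$-preserving endomorphism of $Ru^*$ moving some vertex, contradicting non-periodicity of $Ru^*$; and if $\psi(u^*) = u^*$ but $w <_R v$, composing $\psi^{-1}$ with the inclusion $Rv \hookrightarrow Rw$ is again a non-trivial order-preserving endomorphism of $Ru^*$ (it maps $Ru^*$ into $R\psi^{-1}(u^*) \subseteq Ru^*$, nontrivially since $v \ne w$), the same contradiction. So in fact no $w <_R u^*$ can satisfy $Rv \cong Rw$ for $v \geq_R u^*$, and we may take $v^* = u^*$.

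I expect the main obstacle to be the bookkeeping around comparing the intrinsic double-ray structure (isomorphism "regardless of the linear orders", as the preliminaries stress) with the ambient order $\leq_R$: one must argue cleanly that any double-ray isomorphism between two tails of the \emph{same} $R$ either preserves or reverses $\leq_R$, and then rule out reversal using the asymmetry of a half-infinite order. The other point requiring care is ensuring that the contradiction with non-periodicity is really obtained — i.e. that the endomorphism one constructs is \emph{non-trivial} ($\neq \mathrm{id}$) and $\leq_R$-preserving, and lands inside the relevant tail so that \cref{lem:non-periodic_subray}'s conclusion applies to it. Once these two observations are in place, the equality $v = w$ in all cases is immediate.
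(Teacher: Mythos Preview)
Your overall strategy coincides with the paper's --- take $v^*$ with $Rv^*$ non-periodic via \cref{lem:non-periodic_subray}, and from an isomorphism $Rv \cong Rw$ with $v \neq w$ manufacture a non-trivial endomorphism of $Rv^*$ --- but the execution has two concrete problems.

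First, you have the orientation of $Rv$ backwards. In the paper's conventions $Rv$ consists of all vertices $\leq_R v$, so $v$ is the $\leq_R$-\emph{maximal} vertex of $Rv$ (the unique vertex of degree~$1$), and $Rw$ does have a $\leq_R$-maximal vertex, namely $w$. In particular $Rv$ and $Rw$ are rays, not double rays, so there is no ``order-reversing'' alternative to rule out: any digraph isomorphism $\psi\colon Rv \to Rw$ must send the unique degree-$1$ vertex $v$ to the unique degree-$1$ vertex $w$ and is then the shift by the signed distance from $v$ to $w$. Several subsequent inequalities inherit this sign error; for instance in your case $w <_R u^*$ one actually gets $\psi(u^*) <_R u^*$, so $\psi|_{Ru^*}$ is already a non-trivial endomorphism of $Ru^*$ and no iteration is needed.

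Second, and more seriously, your case $v, w \geq_R u^*$ is never finished: you establish $\psi(v) = w$ and then move on, but $\psi(v) = w$ is not the desired conclusion $v = w$. What is missing is exactly the step the paper supplies: if $w <_R v$ then $\psi$ restricts to a non-trivial endomorphism of $Rv^* \subseteq Rv$ (its image lies in $R\psi(v^*) \subsetneq Rv^*$), and if $w >_R v$ then $\psi^{-1}\colon Rw \to Rv$ does the same; either way this contradicts non-periodicity of $Rv^*$. Once this is made explicit, your separate treatment of $w <_R u^*$ becomes superfluous --- the paper's argument handles all $w \neq v$ uniformly in two lines.
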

\begin{proof}
We choose $v^\ast$ such that $Rv^\ast$ is non-periodic according to \cref{lem:non-periodic_subray}. Let $v \geq_R v^*$ and $w \in V(R)$ such that $Rv \cong Rw$.
If $w <_R v$, we can restrict the isomorphism $Rv \to Rw$ to a non-trivial endomorphism of $R v^*$, contradicting that $R v^*$ is non-periodic.
If $w >_R v$, we can restrict the isomorphism $Rw \to Rv$ to a non-trivial endomorphism of $R v^*$, which again is a contradiction.
Thus $v = w$ holds.
\end{proof}

\begin{theorem} \label{thm:periodic_dray}
For every non-periodic double ray $R$ there exists $\hat{v} \in V(R)$ such that for all $w \in V(R)$:
\begin{enumerate}[label=(\arabic*)]
\item\label{item:periodic_dray1} $R \hat{v} \not\leq \hat{v} R, $
\item\label{item:periodic_dray2} $R \hat{v} \cong Rw \Rightarrow \hat{v}R \cong wR$, and
\item\label{item:periodic_dray3} $R \hat{v} \cong wR \Rightarrow \hat{v}R \cong Rw$.
\end{enumerate}
\end{theorem}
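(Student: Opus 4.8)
The plan is to look for $\hat v$ among the vertices $v \ge_R v^*$, where, combining \cref{lem:non-periodic_subray} with \cref{lem:periodic_dray} (taking the $\le_R$-larger of the two vertices they produce), we first fix $v^*$ so that $Rv^*$ is non-periodic and, in addition, $Rv \cong Rw$ implies $v = w$ for all $v \ge_R v^*$ and all $w \in V(R)$. Two preliminary remarks then make condition (2) essentially free and set up a dichotomy for the rest. First, by the choice of $v^*$, for every $\hat v \ge_R v^*$ the only vertex $w$ with $R\hat v \cong Rw$ is $\hat v$ itself, so (2) holds trivially. Second, since an endomorphism of a ray preserves the ray order and pushes every vertex forward along the ray, a non-trivial endomorphism of $R\hat v$ (for $\hat v \ge_R v^*$) restricts to a non-trivial endomorphism of the tail $Rv^*$; hence every such $R\hat v$ is non-periodic, and consequently there is \emph{at most one} vertex $w$ with $R\hat v \cong wR$ (two of them would compose to a proper self-embedding of the non-periodic ray $R\hat v$).

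With this in place the argument splits. Suppose first that some $\hat v \ge_R v^*$ has the property that $R\hat v$ is isomorphic to no right tail $wR$ at all. For this $\hat v$, conditions (1) and (3) hold vacuously — note that $R\hat v \le \hat v R$ would mean that $R\hat v$ is isomorphic to a tail of $\hat v R$, i.e.\ to some $uR$ with $u \ge_R \hat v$ — and (2) was already handled, so we are done.

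The substantial case is when \emph{every} $R\hat v$ with $\hat v \ge_R v^*$ is isomorphic to a right tail, necessarily (by the above) to a unique one, which we call $w(\hat v)R$. The plan here is to glue the local isomorphisms $R\hat v \to w(\hat v)R$ into a single global symmetry of $R$. Writing $\hat v^+$ for the $\le_R$-successor of $\hat v$, the ray $R\hat v$ arises from $R\hat v^+$ by deleting its unique degree-one vertex; transporting this along the isomorphism $R\hat v^+ \cong w(\hat v^+)R$ and using uniqueness of $w(\cdot)$ gives $w(\hat v^+) = w(\hat v)^-$, so as $\hat v$ runs through the $\le_R$-successors of $v^*$ the vertices $w(\hat v)$ form an infinite strictly $\le_R$-decreasing sequence. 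Since isomorphisms between rays are unique (a ray has no non-trivial automorphism), the isomorphisms $R\hat v \to w(\hat v)R$ are pairwise compatible, and — because the domains $R\hat v$ exhaust $V(R)$ (the $\hat v$ are cofinal) while the images $w(\hat v)R$ exhaust $V(R)$ (the $w(\hat v)$ are coinitial) — their union is an order-reversing automorphism $\Phi$ of $R$ with $\Phi(\hat v) = w(\hat v)$ for all $\hat v \ge_R v^*$. Finally, choosing $\hat v \ge_R v^*$ far enough to the right that $\Phi(\hat v) <_R \hat v$: the unique right tail isomorphic to $R\hat v$ is $\Phi(\hat v)R$, which lies strictly to the left of $\hat v$, so no $uR$ with $u \ge_R \hat v$ is isomorphic to $R\hat v$ and (1) holds; and if $R\hat v \cong wR$ then $w = \Phi(\hat v)$, while $\Phi$ restricts to an isomorphism $\hat v R \to R\Phi(\hat v) = Rw$, so (3) holds.

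I expect the main obstacle to be that last step — upgrading the family of ``local'' isomorphisms $R\hat v \cong w(\hat v)R$ to one honest automorphism of $R$. The crucial inputs are that the tails $R\hat v$ for $\hat v \ge_R v^*$ are non-periodic (this forces $w(\hat v)$ to be unique and pins down how $w(\hat v)$ moves with $\hat v$) together with the rigidity of rays (which makes the local isomorphisms fit together coherently); the remainder is bookkeeping with the linear order $\le_R$.
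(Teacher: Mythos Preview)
Your proof is correct and follows the paper's overall architecture: the same choice of $v^*$, the same dichotomy (either some $R\hat v$ with $\hat v\ge_R v^*$ embeds into no right tail, or every such $R\hat v$ does), the same uniqueness of the target vertex $w(\hat v)=\alpha(\hat v)$, and the same monotonicity argument leading to a choice of $\hat v$ with $w(\hat v)<_R\hat v$.

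The one genuine difference is how you handle \ref{item:periodic_dray3} in the second case. You glue the local isomorphisms $R\hat v\to w(\hat v)R$ into a global order-reversing automorphism $\Phi$ of $R$ and then read off $\hat vR\cong R\Phi(\hat v)$ directly from $\Phi$. The paper instead stays local: it observes that in the single isomorphism $\psi\colon R\hat v\to\alpha(\hat v)R$, the vertex $\alpha(\hat v)$ (which lies in $R\hat v$ once $\alpha(\hat v)<_R\hat v$) must map to $\hat v$ by a distance-to-root count, so $\psi$ restricts to the desired isomorphism $R\alpha(\hat v)\to\hat vR$. Your route yields the pleasant structural byproduct that in Case~2 the double ray $R$ admits an order-reversing automorphism, at the cost of the extra bookkeeping (compatibility via rigidity of rays, cofinality/coinitiality). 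The paper's route is shorter but less informative. Both are valid; neither dominates the other.
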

\begin{proof}
Let $v^*$ be as in \cref{lem:periodic_dray}.
\begin{enumerate}[label={\textbf{Case \arabic*:}}, ref=Case \arabic*, align=left, leftmargin=5pt]
\item \textbf{There is $v \geq_R v^*$ such that $R v \not\cong w R$ for all $w \in V(R)$.}

\noindent In this case we set $\hat{v}:= v$, which directly implies \labelcref{item:periodic_dray1} and \labelcref{item:periodic_dray3}.
For \labelcref{item:periodic_dray2}, since $ \hat{v} \geq v^*$ and $v^\ast$ was picked with the property of \cref{lem:periodic_dray}, $R \hat{v}  \cong R w $ implies $ \hat{v} = w$ and thus $\hat{v}R = wR$.

\item \textbf{For all $v \geq_R v^*$ there is $\alpha(v) \in V(R)$ such that $R v \cong \alpha(v) R$.}

\begin{claim}\label{clm:alpha_of_v_is_unique_for_v}
For every $v \geq_R v^*$, the vertex $\alpha(v)$ is unique.
\end{claim} 
\begin{claimproof}
Suppose for a contradiction that there are $\alpha(v) <_R \alpha(v)' \in V(R)$ such that $\alpha(v) R \cong R v \cong \alpha(v)' R$. Since $\alpha(v)' R$ is a proper tail of $\alpha(v) R$, it follows that $R v \cong \alpha(v)' R$ is isomorphic to a proper tail of $R v \cong \alpha(v) R$.
Thus there exists a non-trivial endomorphism of $Rv$, which contradicts that $v^*$ has the property of \cref{lem:periodic_dray}.
\end{claimproof}

\begin{claim}\label{clm:alpha_of_v_smaller_than_v}
There is $\hat{v} \geq_R v^*$ such that $\alpha(\hat{v}) <_R \hat{v}$.
\end{claim}
\begin{claimproof}
Let $v$ and $v'$ be any vertices of $R$ with $v^\ast \leq_R v <_R v'$.
Since $ R v$ is a proper tail of $ R v'$, it follows that $\alpha(v) R \cong R v$ is isomorphic to a proper tail of $\alpha(v') R \cong R v'$. Thus there is a vertex $w >_R \alpha(v')$ such that $w R \cong \alpha(v) R$.
Then $w = \alpha(v)$ by \cref{clm:alpha_of_v_is_unique_for_v}.
In conclusion, we have established that $\alpha(v) >_R \alpha(v')$ whenever $v <_R v'$.
Therefore, it is possible to pick a vertex $\hat{v}$ which is sufficiently large with respect to $\leq_R$, so that $\alpha(\hat{v}) <_R \hat{v}$.
\end{claimproof}
\noindent We show that any vertex $\hat{v}$ as in \cref{clm:alpha_of_v_smaller_than_v} satisfies properties \labelcref{item:periodic_dray1,item:periodic_dray2,item:periodic_dray3}.
For \labelcref{item:periodic_dray1}, assume that $R\hat{v} \leq \hat{v}R$.
This means that there is $v' \geq_R \hat{v}$ with $R\hat{v} \cong v'R$.
A contradiction since by \cref{clm:alpha_of_v_is_unique_for_v} $\alpha(\hat{v})$ is unique, but $\alpha(\hat{v}) <_R \hat{v} \leq_R v'$ by choice of $\hat{v}$.
For \labelcref{item:periodic_dray2}, recall that $\hat{v} \geq_R v^\ast$. Thus, \cref{lem:periodic_dray} implies that the only vertex $w \in V(R)$ with $R\hat{v} \cong Rw$ is $\hat{v}$.
This proves \labelcref{item:periodic_dray2} since clearly $\hat{v} R \cong \hat{v} R$.
For \labelcref{item:periodic_dray3}, as $\alpha(\hat{v})$ is unique by \cref{clm:alpha_of_v_is_unique_for_v}, it is enough to prove $\hat{v} R \cong R \alpha(\hat{v})$.
Let $\psi: R \hat v \to \alpha(\hat v) R$ be an isomorphism, which exists by choice of $\alpha(\hat v)$.
Then $\psi$ maps $\alpha(\hat v)$ to $\hat v$, since the distance between $\alpha(\hat v) \in V(R \hat v)$ and the root of $R \hat v$ and the distance between $\hat v \in V(\alpha(\hat v) R)$ and the root of $\alpha(\hat v) R$ are the same.
Thus $\psi$ can be restricted to an isomorphism $R \alpha(\hat v) \to \hat v R$.
\end{enumerate}
This completes the proof.
\end{proof}
Now we combine \cref{theo:ray_main_theorem} and \cref{thm:periodic_dray} to prove:
\begin{theorem}\label{theo:dray_negative_non_periodic}
Any non-periodic double ray $R$ with infinitely many turns is non-ubiquitous.
\end{theorem}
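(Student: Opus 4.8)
The plan is to reduce the non-ubiquity of $R$ to that of an oriented ray with infinitely many turns, which is available from \cref{theo:ray_main_theorem}. Apply \cref{thm:periodic_dray} to get $\hat v\in V(R)$. As $R$ has infinitely many turns, one of the tails $R\hat v$, $\hat v R$ does too; reversing $\leq_R$ and re-applying \cref{thm:periodic_dray} if necessary — this merely swaps the two tails, and preserves non-periodicity — I may assume $S:=R\hat v$ has infinitely many turns. Set $T:=\hat v R$, so $R=S\cup T$ and $V(S)\cap V(T)=\{\hat v\}$. The inputs I will use about $\hat v$ are: $S\not\leq T$ by \cref{thm:periodic_dray}\labelcref{item:periodic_dray1}; the constraints on isomorphisms between the two halves of $R$ and the tails of $R$ given by \cref{thm:periodic_dray}\labelcref{item:periodic_dray2,item:periodic_dray3}; and (unwinding \cref{lem:periodic_dray}, whose $v^\ast$ satisfies $v^\ast\leq_R\hat v$) that $Rv\cong Rw$ implies $v=w$ whenever $v\geq_R\hat v$.

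By \cref{theo:ray_main_theorem}, $S$ is non-ubiquitous, so fix a digraph $D_0$ which contains $n$ pairwise disjoint copies of $S$ for every $n\in\N$ but not infinitely many. I build $D$ from $D_0$ by attaching at each vertex $x\in V(D_0)$ a fresh copy $T_x$ of $T$ with root identified with $x$, in the (forced, globally consistent) orientation for which the union of $T_x$ with any copy of $S$ in $D_0$ having root $x$ is a copy of $R$. Then $D$ contains $n$ pairwise disjoint copies of $R$ for each $n$: take pairwise disjoint copies $S_1,\dots,S_n$ of $S$ in $D_0$, with necessarily distinct roots $\rho_1,\dots,\rho_n$, and let $R_i:=S_i\cup T_{\rho_i}\cong R$; these are pairwise disjoint since the pendants $T_{\rho_i}$ are fresh.

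It remains to show $D$ has no infinite family of pairwise disjoint copies of $R$; it suffices to prove that every copy of $R$ in $D$ contains a copy of $S$ that lies entirely in $D_0$, since then an infinite disjoint family of $R$'s in $D$ would yield an infinite disjoint family of $S$'s in $D_0$, contradicting the choice of $D_0$. Structurally: each $x\in V(D_0)$ is a cut vertex of $D$ separating $T_x-x$ from the rest, so a copy $R'$ of $R$, being a double ray, can enter a given $T_x-x$ only once and cannot come back; hence $R'$ meets $D_0$ in an interval (under a fixed isomorphism $\varphi\colon R\to R'$), namely all of $R'$, a single tail of $R'$, or a finite subpath, and each tail of $R'$ lying in a pendant is a copy of a tail of $R$ embedded in a copy of $T$.

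To produce the desired copy of $S$ inside $D_0$, look at $\varphi(R\hat v)=\varphi(S)$. It cannot lie inside one pendant $T_x$, as that would give $S\leq T$, contradicting \cref{thm:periodic_dray}\labelcref{item:periodic_dray1}; so either $\varphi(S)\subseteq V(D_0)$ and we are done, or $\varphi(S)$ has a proper tail $\varphi(Rw)$ with $w<_R\hat v$ embedded in a pendant, whence $Rw\leq\hat v R$. The task then is to determine, via \cref{thm:periodic_dray}\labelcref{item:periodic_dray2,item:periodic_dray3} and the injectivity from \cref{lem:periodic_dray}, exactly which $w<_R\hat v$ allow $Rw\leq\hat v R$, and to verify that for every such configuration the part of $R'$ that remains in $D_0$ — a tail of $R'$ if the $T$-half of $R'$ stays in $D_0$, a finite path if it too escapes into a pendant — nevertheless contains a copy of $S$; informally, the symmetry properties forbid a copy of $R$ from using the pendant rays to carry away its $S$-half and thereby avoid depositing a copy of $S$ in $D_0$. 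This final case analysis is the step I expect to be the main obstacle; everything else is routine bookkeeping with cut vertices and the properties of $\hat v$.
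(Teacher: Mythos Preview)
Your overall strategy matches the paper's: reduce to \cref{theo:ray_main_theorem} by building $D$ from a witness $D_0$ for the non-ubiquity of one tail of $R$, with fresh copies of the other tail hung off every vertex. But you have assigned the two tails the wrong way round, and this is precisely why the case analysis you flag as ``the main obstacle'' cannot be completed with the tools you list.

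The properties \labelcref{item:periodic_dray1,item:periodic_dray2,item:periodic_dray3} of \cref{thm:periodic_dray} are all statements \emph{about $R\hat v$}: they let you argue from a tail isomorphic to $R\hat v$ to information about the complementary tail. In your construction the pendants are copies of $T=\hat v R$, so a tail of $R'$ lying in a pendant is isomorphic to $\hat v R$ (indeed it must equal the whole pendant, giving $\cong$ rather than the $\leq$ you wrote---but this does not help). From $Rw\cong\hat v R$ you cannot deduce $wR\cong R\hat v$ via \labelcref{item:periodic_dray2} or \labelcref{item:periodic_dray3}, since their hypotheses go the other way; and you also lack the reverse of \labelcref{item:periodic_dray1} (namely $\hat v R\not\le R\hat v$), so you cannot even rule out \emph{both} ends of $R'$ escaping into pendants and leaving only a finite path in $D_0$. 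Your phrase ``re-applying \cref{thm:periodic_dray}'' after reversing does not fix this: a fresh application again yields properties about the new $R\hat v$, which is your $S$, so the mismatch persists.

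The paper avoids this by reversing \emph{before} invoking \cref{thm:periodic_dray}: first arrange that every $vR$ has infinitely many turns, then choose $\hat v$. Now $\hat vR$ automatically has infinitely many turns; the core $D'$ witnesses non-ubiquity of $\hat vR$, and the pendants are copies of $R\hat v$. A tail of a copy $\tilde R$ lying in a pendant then equals that pendant, hence is $\cong R\hat v$; \labelcref{item:periodic_dray2} or \labelcref{item:periodic_dray3} forces the complementary tail to be $\cong\hat vR$; \labelcref{item:periodic_dray1} forbids that complementary tail from entering any pendant; so it lies entirely in $D'$ and furnishes the required copy of $\hat vR$ there. The whole case analysis collapses to two symmetric lines, with nothing left over.
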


\begin{proof}
Let $R$ be any such double ray. We construct a digraph $D$ that contains arbitrarily many but not infinitely many copies of $R$.
Without loss of generality, for every $v \in V(R)$ the ray $vR$ contains infinitely many turns (otherwise reverse the order $\leq_R$).
Let $\hat{v} \in V(R)$ be as in \cref{thm:periodic_dray}.
As $\hat{v} R$ contains infinitely many turns, there is a digraph $D'$ containing arbitrarily many but not infinitely many disjoint copies of $\hat{v} R$ by \cref{theo:ray_main_theorem}.
We construct $D$ from $D'$ and a family $(S_x)_{x \in V(D')}$ of disjoint copies of $R \hat{v}$ by identifying the root of $S_x$ with $x$ for each $x \in V(D')$.

By construction, $D$ contains arbitrarily many disjoint copies of $R$.
We have to show that $D$ does not contain infinitely many disjoint copies of $R$, which implies the theorem.
It suffices to prove that each copy of $R$ in $D$ has a tail isomorphic to $\hat{v} R$ that is contained in the subdigraph $D'$ of $D$.
Then $D$ cannot contain infinitely many disjoint copies of $R$ since $D'$ does not contain infinitely many disjoint copies of $\hat{v} R$.

Let $\tilde{R}$ be any copy of $R$ in $D$.
If $\tilde{R}$ is completely contained in $D'$, we are done.
Thus we can suppose that there is $x \in V(D')$ and $w \in V(\tilde{R})$ such that either $S_x = \tilde{R} w$ or $S_x = w \tilde{R}$.

In the former case, we have $R \hat{v} \cong S_x = \tilde{R} w$ and thus $\hat{v} R \cong w \tilde{R}$ by \cref{thm:periodic_dray} \labelcref{item:periodic_dray2}.
It follows from \labelcref{item:periodic_dray1} that $R \hat{v} \not\le \hat{v} R \cong w \tilde{R}$.
Hence $w \tilde{R}$ cannot have a tail in any $S_y$ for $y \in V(D')$.
Thus $w \tilde{R}$ is the desired tail of $\tilde{R}$ which is isomorphic to $\hat{v} R$ and contained in $D'$.

Similarly, in the latter case, we have $R \hat{v} \cong S_x = w \tilde{R}$ and thus $\hat{v} R \cong \tilde{R} w$ by \labelcref{item:periodic_dray3}.
It follows from \labelcref{item:periodic_dray1} that $R \hat{v} \not\le \hat{v} R \cong \tilde{R} w$.
Hence $\tilde{R} w$ cannot have a tail in any $S_y$ for $y \in V(D')$ and $\tilde{R} w$ is the desired tail of $\tilde{R}$.
\end{proof}

\subsection{Periodic double rays with infinitely many turns}\label{subsec:dray_negative_periodic}
Let $R$ be a periodic double ray with infinitely many turns and let $\hat R, \tilde{R}$ be disjoint copies of $R$.
By periodicity of $R$, one can show that identifying a turn of $\hat R$ of out-degree $2$ and a turn of $\tilde{R}$ of in-degree $2$ results in a digraph in which a copy of $R$ has to be completely contained in either $\hat R$ or $\tilde{R}$.
We use this fact to prove:

\begin{theorem}\label{theo:dray_negative_periodic}
Any periodic double ray with infinitely many turns is non-ubiquitous.
\end{theorem}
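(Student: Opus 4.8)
The plan is to adapt the grid construction from the proof of \Cref{theo:dray_even_number_of_turns}, but with the turn‑to‑turn gluing described just above the statement replacing the gluing of in‑ and out‑rays: at a vertex obtained by identifying an out‑degree‑$2$ turn of one copy of $R$ with an in‑degree‑$2$ turn of another, the first copy contributes only arcs pointing away from the vertex and the second only arcs pointing into it. First I would record two consequences of periodicity. Identifying $V(R)$ with $\Z$, a non‑trivial $\leq_R$‑preserving endomorphism of $R$ sends consecutive vertices to consecutive vertices (arcs map to arcs, and in a double ray arcs join only consecutive vertices), hence is the translation $n\mapsto n+d$ with $d$ the periodicity; in particular it is an \emph{auto}morphism, so $R$ has infinitely many turns of each type in each of its two tails, and the bi‑infinite sequence $(\lambda_i)_{i\in\Z}$ of phase lengths of $R$ is periodic with period $P:=2t$, where $2t\geq 2$ is the (necessarily even and positive) number of turns of $R$ in any $d$ consecutive vertices. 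Let $p$ be the length of a longest phase of $R$; note $p\leq d-1<d$, and set $N:=3d$.

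For the construction, let $I:=\{(n,m)\in\N^2\colon n\leq m\}$ and let $(R(n,m))_{(n,m)\in I}$ be pairwise disjoint copies of $R$. For each pair $(n,m)\neq (n',m')$ in $I$ with $m\neq m'$, say $m<m'$, choose an out‑degree‑$2$ turn of $R(n,m)$ and an in‑degree‑$2$ turn of $R(n',m')$; arrange that within each single copy all turns chosen over all of its pairings are distinct and pairwise at distance at least $N$, which is possible since each copy has infinitely many turns of each type in each tail. Let $D$ be obtained from $\bigsqcup_{(n,m)\in I}R(n,m)$ by identifying, for each such pair, the two chosen turns. Then every glue vertex of $D$ arises by identifying exactly one out‑degree‑$2$ turn and one in‑degree‑$2$ turn of two distinct copies; two copies $R(n,m)$ and $R(n',m')$ meet in $D$ if and only if $m\neq m'$; and for each $m$ the copies $R(1,m),\dots,R(m,m)$ are pairwise disjoint, so $D$ contains $m$ disjoint copies of $R$ for every $m\in\N$.

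It then remains to show that $D$ has no infinite family of pairwise disjoint copies of $R$, and for this it suffices to prove the \emph{key claim}: every copy of $R$ in $D$ equals some $R(n,m)$. Indeed, an infinite pairwise‑disjoint family would then be a family of distinct copies $R(n_i,m_i)$ with all $m_i$ equal (copies from different parts share a glue vertex), hence with the $n_i$ drawn from a finite set, a contradiction. To prove the key claim, let $R'$ be a copy of $R$ in $D$. If $R'$ lies inside a single copy $R(n,m)$ then $R'=R(n,m)$, since a subdigraph of a double ray whose underlying graph is a bi‑infinite path must be all of it. Otherwise the arcs of $R'$ do not all lie in one copy, so walking along $R'$ we meet two consecutive arcs $e,e'$ sharing a vertex $z$ and lying in distinct copies $R(a),R(b)$; then $z$ is a glue vertex, shared by precisely $R(a)$ and $R(b)$, and since one of these copies contributes only arcs out of $z$ and the other only arcs into $z$, the vertex $z$ is a \emph{through‑vertex} of $R'$.

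The heart of the matter is then to derive a contradiction from such a $z$ using periodicity. The arc $e$ lies in a phase $Q_a$ of $R(a)$ with endpoint $z$ and length $q_a\geq 1$; as $q_a\leq p<N$ and the glue vertices of $R(a)$ are $N$‑separated, $R'$ follows $R(a)$ through all of $Q_a$, reaching a turn $y_a$ of $R(a)$ that is also a turn of $R'$ and is not a glue vertex, and then continues inside $R(a)$ for at least $N-p\geq 2d$ further vertices; symmetrically there is a phase $Q_b$ of length $q_b\geq 1$ ending at a turn $y_b$. Hence the phase of $R'$ containing $z$ runs from $y_b$ to $y_a$ and has length $q_a+q_b$, with $z$ interior to it. On the other hand $R'$ agrees with $R(a)$ on a block of at least $P$ consecutive phases past $y_a$; since the phase‑length sequences of $R'$ and of $R(a)$ are both $P$‑periodic (both digraphs being isomorphic to $R$) and agree on a full period, they agree everywhere after a uniform index shift. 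But the phase of $R(a)$ occupying the index at which $R'$ has the phase containing $z$ is exactly $Q_a$, of length $q_a$; so that $R'$‑phase has length $q_a$ as well, forcing $q_a+q_b=q_a$ and $q_b=0$, a contradiction. The main obstacle is precisely this last paragraph: one must pick the separation constant $N$ large enough and handle the phase‑length sequences with care — using that a $P$‑periodic sequence is determined by any $P$ of its consecutive terms — to make rigorous the idea that merging two phases of $R$ at a turn cannot yield a digraph isomorphic to $R$; the combinatorial bookkeeping in the previous paragraph (that a crossing copy forces a through‑vertex at a glue vertex, and the final counting) is routine by comparison.
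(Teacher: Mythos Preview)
Your construction is essentially the paper's: the same index set $I$, the same pairwise gluing of an out-degree-$2$ turn of one copy to an in-degree-$2$ turn of another whenever the second coordinates differ, and the same reduction to the \emph{key claim} that every copy of $R$ in $D$ coincides with some $R(n,m)$. Your argument for the key claim is correct but takes a more elaborate route than the paper's. You track phase-length sequences: the merged phase through the glue vertex $z$ has length $q_a+q_b$, while matching a full period of phases past $y_a$ forces that same phase to have length $q_a$, yielding $q_b=0$. This works, but requires the larger separation $N=3d$ and some bookkeeping about how many complete phases fit in the stretch past $y_a$.

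The paper's argument for the key claim is shorter and needs only separation equal to the periodicity $d$ (their $p$): if $\hat R$ switches copies at a glue vertex $v$, then by the separation condition the first $d$ arcs of $v\hat R$ coincide with the first $d$ arcs of $vR(n,m)$; since both $\hat R$ and $R(n,m)$ have periodicity $d$, the arc of $\hat R$ preceding $v$ and the arc of $R(n,m)$ preceding $v$ must have the same orientation relative to $v$, directly contradicting the source/sink gluing rule. So your phase-sequence detour can be replaced by a one-line periodicity observation at the level of individual arc directions.
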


\begin{proof}
Let $R$ be any periodic double ray with infinitely many turns and denote the periodicity of $R$ by $p \in \N$.
We will construct a digraph $D$ containing arbitrarily many but not infinitely many copies of $R$.

We set
\[I:=\{(n,m) \in \N^2 \colon n \leq m\}\]
and let $(R(n,m))_{(n,m)\in I}$ be a family of pairwise disjoint copies of $R$.
Let $D$ be the digraph constructed from the disjoint union $\bigsqcup_{(n,m) \in I} R(n,m)$ by identifying pairwise disjoint pairs of vertices such that for any $(n,m), (n',m') \in I$:

\begin{enumerate}[label=(\roman*)]
\item\label{item:m_eq_m'} no vertices of $R(n,m)$ and $R(n',m')$ have been identified with each other if $m = m'$,
\item\label{item:m_neq_m'} exactly one vertex of $R(n,m)$ and exactly one vertex of $R(n',m')$ have been identified if $m \neq m'$,
\item\label{item:different_out_degrees} if $v \in R(n,m)$ and $w \in R(n',m')$ have been identified with each other, then either the out-degree of $v$ in $R(n,m)$ is 2 and the out-degree of $v$ in $R(n',m')$ is 0 or vice versa, and
\item\label{item:distance_p+1} two vertices $v \neq w \in R(n,m)$ that have been identified with other vertices have distance at least $p$ in $R(n,m)$.
\end{enumerate}
A graph $D$ satisfying \labelcref{item:m_eq_m',item:m_neq_m',item:distance_p+1,item:different_out_degrees} can be constructed by enumerating all unordered pairs $\{R(n,m),R(n',m')\}$ of double rays with $m \neq m'$ and recursively identifying suitable turns of the two rays in each pair.

The digraph $D$ contains arbitrarily many disjoint copies of $R$, as the double rays $R(1, m), \dots, R(m,m)$ are disjoint for any $m \in \N$ by \ref{item:m_eq_m'}.
To prove that $D$ does not contain infinitely many disjoint copies of $R$, it suffices to show that any copy of $R$ in $D$ is of the form $R(n,m)$ for some $(n,m) \in I$:
Then any infinite family of disjoint copies of $R$ in $D$ would contain two rays $R(n,m)$, $R(n',m')$ with $m \neq m'$ by definition of $I$.
However, $R(n,m)$ and $R(n',m')$ are not disjoint in $D$ by \ref{item:m_neq_m'}.

Suppose for a contradiction that there is a copy $\hat R$ of $R$ in $D$ that is not contained in some $R(n,m)$ for $(n,m) \in I$.
Then there are $(n,m) \neq (n',m') \in I$ and $v \in V(\hat R)$ such that one arc of $\hat R$ incident with $v$ is contained in $R(n,m)$ and the other arc of $\hat R$ incident with $v$ is contained in $R(n',m')$.
We assume without loss of generality that the first arc of $v \hat R$ is contained in $v R(n,m)$ (and not in $R(n,m)v$, $R(n',m')v$ or $vR(n',m')$).

Since $R(n,m)$ has periodicity $p$, the arc $a$ of $ R(n,m)$ preceding $v$ has the same orientation as the $p$-th arc $a'$ of $R(n,m)$ succeeding $a$.
Similarly, the arc $b$ of $\hat R$ preceding $v$ has the same orientation as the $p$-th arc $b'$ of $\hat R$ succeeding $b$.
As no vertices of distance at most $p - 1$ to $v$ in $D$ other than $v$ were identified by \ref{item:distance_p+1}, the first $p$ arcs of $v \hat R$ coincide with the first $p$ arcs of $v R(n,m)$ and in particular we have $a' = b'$.
Hence the arcs $a, b$ either both point towards $v$ or both point away from $v$, contradicting \ref{item:different_out_degrees} since $a\in A(R(n',m'))$ and $b \in A(R(n,m))$.
\end{proof}

\section{Positive results}\label{sec:dray_positive_result}

In this \namecref{sec:dray_positive_result} we prove \cref{theo:double_rays_positive}, for which we need three results from \cite{GKR22}.
Firstly, we need the following lemma; its proof is straight-forward:

\begin{lemma}[\cite{GKR22}*{Lemma 2.1}]\label{lem:change_orientation}
A digraph $H$ is ubiquitous if and only if the digraph obtained by reversing the orientation of all arcs of $H$ is ubiquitous.
\end{lemma}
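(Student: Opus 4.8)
The plan is to exploit the fact that reversing all arcs is an involutive operation on digraphs that is compatible with the subdigraph relation and with disjointness. Write $\overleftarrow{D}$ for the digraph obtained from a digraph $D$ by reversing the orientation of every arc; then $\overleftarrow{\overleftarrow{D}} = D$, and if $\varphi$ is an isomorphism from a digraph $A$ onto a subdigraph of $D$, then the very same vertex map $\varphi$ is an isomorphism from $\overleftarrow{A}$ onto a subdigraph of $\overleftarrow{D}$ — the image occupies the same vertex set, only with the arcs reversed. Consequently, a family of pairwise disjoint copies of $H$ in $D$ is carried, via this bijection, to a family of pairwise disjoint copies of $\overleftarrow{H}$ in $\overleftarrow{D}$ of exactly the same cardinality, and this assignment is itself a bijection between such families (its inverse being reversal again).

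From this bookkeeping it follows that, for any digraph $G$ and any $k \in \N \cup \{\infty\}$, $G$ contains $k$ disjoint copies of $\overleftarrow{H}$ if and only if $\overleftarrow{G}$ contains $k$ disjoint copies of $H$. Now suppose $H$ is ubiquitous and let $G$ be any digraph containing $k$ disjoint copies of $\overleftarrow{H}$ for every $k \in \N$. Then $\overleftarrow{G}$ contains $k$ disjoint copies of $H$ for every $k$, so by ubiquity of $H$ it contains infinitely many disjoint copies of $H$, whence $G = \overleftarrow{\overleftarrow{G}}$ contains infinitely many disjoint copies of $\overleftarrow{H}$. Thus $\overleftarrow{H}$ is ubiquitous. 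The converse implication is obtained verbatim after replacing $H$ by $\overleftarrow{H}$ and using $\overleftarrow{\overleftarrow{H}} = H$, so the two statements are equivalent.

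There is essentially no obstacle: the argument is pure symmetry, and the only point requiring (minimal) care is verifying that reversal genuinely commutes with passing to subdigraphs, so that the "same vertex map" really does yield bona fide copies of $\overleftarrow{H}$ — but this is immediate from the definitions of subdigraph and of arc reversal. No results beyond the elementary notions of \cref{sec:dray_preliminaries} are needed, which is why the proof is straightforward.
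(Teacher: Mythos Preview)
Your proof is correct and is exactly the symmetry argument the paper has in mind; indeed, the paper does not spell out a proof at all but merely remarks that ``its proof is straight-forward'' and cites \cite{GKR22}. There is nothing to add or compare.
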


Secondly, we need \cite{GKR22}*{Lemma 3.3}, which enables us to thin out tribes to make them forked at given finite subdigraphs:

\begin{lemma}[\cite{GKR22}*{Lemma 3.3}]\label{lem:disjoint_initial_segments}
Let $D$ and $H$ be digraphs and let $\hat{H} \subseteq H $ be a finite subdigraph.
If there exists a thick $H$-tribe in $D$, then there is a thick $H$-tribe $\F$ in $D$ that is forked at $\hat{H}$.
\end{lemma}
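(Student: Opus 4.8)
The plan is to prove the lemma by a recursive thinning-out argument. Fix a thick $H$-tribe $\F_0$ in $D$ and write $q := |V(\hat H)|$, which is finite by hypothesis. I would construct finite sets $G_1, G_2, \dots$ of pairwise disjoint copies of $H$, each $G_n$ consisting of members of $\bigcup \F_0$, such that $|G_n| \geq n$ for every $n$ and such that for every $n$ the partial family $\{G_1, \dots, G_n\}$ is already forked at $\hat H$. The $H$-tribe $\F := \{G_n : n \in \N\}$ is then thick, since $|G_n| \geq n$, and forked at $\hat H$: any two distinct members lying in a common $G_n$ are disjoint by construction, and any two distinct members lying in different sets $G_i, G_j$ satisfy the required disjointness by the invariant maintained at stage $\max\{i,j\}$.

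For the recursion step, suppose $G_1, \dots, G_{n-1}$ have been constructed. The family $\mathcal{M} := G_1 \cup \dots \cup G_{n-1}$ is a \emph{finite} set of copies of $H$, so $W := \bigcup_{H' \in \mathcal{M}} V(\hat H')$ is a finite set of at most $q \cdot |\mathcal{M}|$ vertices; this finiteness is exactly where the hypothesis that $\hat H$ is finite enters. Using thickness of $\F_0$, choose a set $F \in \F_0$ with $|F|$ large. Because the copies in $F$ are pairwise disjoint, at most $|W|$ of them meet $W$; discard these. For each remaining copy $H'$ of $F$ and each $H'' \in \mathcal{M}$ we then have $\hat H'' \cap H' = \emptyset$ (as $V(\hat H'') \subseteq W$), and also $\hat H' \cap H''' = \emptyset$ for any other kept copy $H'''$, since kept copies lie in the single set $F$ and are therefore disjoint. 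What remains is to secure the last required direction, namely that the $\hat H$-part of a kept copy is disjoint from every \emph{old} member in $\mathcal{M}$; for this I would additionally remove, from each $G_i$ with $i < n$, those copies that are met by the (finite) $\hat H$-part of a copy kept for $G_n$, organising the whole construction so that every $G_i$ still retains at least $i$ copies after this removal and after all analogous removals at later stages. Choosing $G_n$ to consist of $n$ of the surviving copies of $F$ then preserves all invariants.

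The step that needs the most care, and which is the main obstacle, is precisely this bookkeeping: one must arrange the selection of the sets $F$ and the sizes $|G_i|$ so that, throughout the whole (infinite) construction, each $G_i$ never drops below $i$ pairwise disjoint copies of $H$ while the $\hat H$-parts accumulated across all stages remain disjoint from every member committed at any stage. The remaining ingredients — that discarding the boundedly many copies of $F$ meeting the finite set $W$ leaves arbitrarily many disjoint copies, and that the resulting limit family is a thick $H$-tribe forked at $\hat H$ — are then routine.
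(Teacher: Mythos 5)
First, note that the paper does not prove this lemma at all: it is imported verbatim as Lemma~3.3 of \cite{GKR22}, so there is no in-paper argument to compare yours against. Judged on its own terms, your proposal correctly isolates the two halves of forkedness and correctly handles the easy half: since $W$ is finite and the copies in $F$ are pairwise disjoint, discarding the at most $|W|$ copies of $F$ that meet $W$ does guarantee $\hat H'' \cap H' = \emptyset$ for every previously committed member $H''$ and every kept copy $H'$.

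The gap is the other half, $\hat H' \cap H'' = \emptyset$, and it is not mere bookkeeping. You cannot obtain it by discarding new copies: an old member $H''$ is in general an infinite digraph, so it may meet the $\hat H$-part of \emph{every} copy in \emph{every} set of the original tribe, in which case no admissible new copy exists at all once $H''$ has been fixed. And the retroactive removal you propose instead does not close up: at each later stage $n$ the $\hat H$-parts of the at least $n$ copies kept for $G_n$ occupy up to $q\cdot n$ vertices, each of which can delete one member from $G_i$ (the members of $G_i$ being pairwise disjoint), so over all stages $n>i$ the layer $G_i$ may lose up to $\sum_{n>i} q\,n$ members, which is infinite; hence no finite initial size of $G_i$ can survive, and the invariant $|G_i|\geq i$ cannot be maintained by the scheme as described. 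Closing the proof requires a genuinely different mechanism for making the finite $\hat H$-parts of later members avoid the infinite bodies of earlier members --- for instance, choosing the earlier members with foresight about which future $\hat H$-parts their tails are permitted to traverse --- and that is precisely the idea missing from the proposal, not a routine detail.
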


Thirdly, we need the following result, which essentially states that pairs of disjoint out-rays are ubiquitous with an additional property for the roots of the out-rays.
In preparation we define:
Let $U$ be the disjoint union of two out-rays and $X$ a set of pairs of vertices.
We say that $U$ \emph{is rooted in} $X$ if there is $(x,y) \in X$, such that $x$ and $y$ are the roots of the two rays which are the components of $U$.
If $U = \{(x,y)\}$, we also say that $U$ \emph{is rooted in} $(x,y)$.

\begin{lemma}\label{theo:ubiquity_with_starting_set2}
Let $U$ be the disjoint union of two out-rays, let $D$ be a digraph and let $X$ be a set of pairs of vertices of $D$.
If there exists a thick $U$-tribe $\tribe$ in $D$ where all members of $\tribe$ are rooted in $X$, then $D$ contains infinitely many disjoint copies of $U$ that are rooted in $X$.
\end{lemma}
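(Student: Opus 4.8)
The plan is to mimic — and in part reduce to — the corresponding statement for a \emph{single} out-ray: if a digraph admits a thick tribe of out-rays all rooted in a prescribed vertex set $Y$, then it contains infinitely many pairwise disjoint out-rays rooted in $Y$. This single-ray version is a variant of Halin's theorem on the ubiquity of the ray, established for prescribed roots by the tribe method of \cite{GKR22}; reversing all arcs (\cref{lem:change_orientation}) yields the analogous statement for in-rays, which is not needed here. Throughout, write $U = P \sqcup Q$ with $P$ and $Q$ out-rays, and for a copy $C$ of $U$ set $C = P_C \sqcup Q_C$, $x_C := \operatorname{root}(P_C)$, $y_C := \operatorname{root}(Q_C)$, so that $C$ being rooted in $X$ means $(x_C,y_C) \in X$.

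Given the thick $U$-tribe $\tribe$ in $D$ with every member rooted in $X$, first I would (after possibly invoking \cref{lem:disjoint_initial_segments} to fork $\tribe$ at the finite subdigraph of $U$ consisting of its two roots and their incident arcs, for convenience) choose by Zorn's lemma a family $\mathcal M$ of pairwise disjoint copies of $U$ in $D$, all rooted in $X$, that is maximal under inclusion. If $\mathcal M$ is infinite we are done, so assume $\mathcal M = \{C_1,\dots,C_N\}$ is finite and put $Z := V(C_1) \cup \dots \cup V(C_N)$, a union of $2N$ out-rays. By maximality, every copy of $U$ in $D$ rooted in $X$ meets $Z$. Picking $F \in \tribe$ with $\verts{F}$ large enough, each of its members meets $Z$, so a pigeonhole argument yields one of the $2N$ out-rays $\rho$ making up $Z$ together with many pairwise disjoint out-rays $h_1,h_2,\dots$ — each being one half of a member of $F$ and meeting $\rho$ — whose partner halves $h_1',h_2',\dots$ are left untouched. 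Splicing the $h_i$ along $\rho$ (travel along $\rho$ from one entry point to the next, then switch onto the next $h_i$) should produce, together with the $h_i'$, a family of pairwise disjoint copies of $U$ rooted in $X$ of size larger than $N$, contradicting the maximality of $\mathcal M$ and hence proving the lemma.

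The step I expect to be the real obstacle is this final splicing. For a single out-ray it is precisely the Halin-type exchange argument carried out in \cite{GKR22}; here, however, each member of $\tribe$ contributes \emph{two} disjoint out-rays tied together by a pair of $X$, so one must arrange the splice so that (i) the rerouted out-rays stay pairwise disjoint and internally simple, (ii) each rerouted out-ray still has a partner half so that the two roots form a pair of $X$, and (iii) the partner halves $h_i'$ — which need not avoid $\rho$ or the other $h_j$ — do not clash with the reroutings. Equivalently, one can recast the whole proof as an induction that extracts the two out-ray families in tandem rather than one after the other; the underlying difficulty is the one pervading ubiquity arguments, namely that a previously chosen infinite subdigraph cannot simply be deleted from the host, so the tribe must be thinned with care to keep it thick between extractions. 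Once this coordination is organised as in \cite{GKR22}, the remaining checks are routine.
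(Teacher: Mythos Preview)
The paper does not actually prove this lemma: it simply states that the argument is ``very similar to the proof of \cite{GKR22}*{Theorem~3.2}'' and omits it. So there is only the template from \cite{GKR22} to compare against, and that template is the inductive extraction you mention at the very end, not the Zorn/maximal-family argument you lead with.

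Your primary outline has a genuine logical gap. You choose $\mathcal{M}$ maximal under inclusion, assume $\verts{\mathcal{M}}=N$, and then splice together $N{+}1$ pairwise disjoint copies of $U$ rooted in $X$, claiming this ``contradict[s] the maximality of $\mathcal{M}$''. It does not: inclusion-maximality of $\mathcal{M}$ only says that no single copy of $U$ rooted in $X$ is disjoint from $\bigcup\mathcal{M}$; it does \emph{not} say that $N$ is the maximum size of such a family. Your spliced copies all run through $\rho\subseteq\bigcup\mathcal{M}$, so the new family neither extends $\mathcal{M}$ nor produces a copy disjoint from it. And even a correct ``$N\to N{+}1$'' step would be useless here, since ``for every $N$ there are $N$ disjoint copies rooted in $X$'' is already the hypothesis (a thick tribe). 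The fix is exactly what you sketch in your last paragraph: build $U_1,U_2,\dots$ recursively, at each stage choosing $U_n$ and thinning the tribe so that a thick $U$-tribe rooted in $X$ survives in $D-(U_1\cup\dots\cup U_n)$; the coordination issues (i)--(iii) you list are precisely the content of the two-ray analogue of \cite{GKR22}*{Theorem~3.2}. Drop the Zorn framing and run that induction directly.
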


We omit the proof of \cref{theo:ubiquity_with_starting_set2} since it is very similar to the proof of \cite{GKR22}*{Theorem 3.2}.

\begin{theorem}\label{theo:double_rays_positive}
Any double ray with an odd number of turns is ubiquitous.
\end{theorem}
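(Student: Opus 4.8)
The plan is to decompose $R$ into a finite ``core'' together with two out-rays, and then combine the forking lemma (\cref{lem:disjoint_initial_segments}) with the ubiquity of rooted pairs of out-rays (\cref{theo:ubiquity_with_starting_set2}). I begin with a structural reduction. Since $R$ has an odd number of turns, a parity argument on the phases of $R$ shows that its two tails are either both in-rays or both out-rays (in contrast to the even case, where exactly one of each occurs, cf.\ the proof of \cref{theo:dray_even_number_of_turns}); applying \cref{lem:change_orientation} if necessary, I may assume both tails are out-rays. Let $s$ and $t$ be the first and last turn of $R$ (possibly $s = t$), let $p$ be the $\leq_R$-predecessor of $s$ and $q$ the $\leq_R$-successor of $t$, so that $p <_R q$ even when $s = t$, and let $C$ be the finite subdigraph of $R$ induced by $\{w : p \leq_R w \leq_R q\}$. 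Then $Rp$ and $qR$ are disjoint out-rays with $Rp \cap C = \{p\}$ and $C \cap qR = \{q\}$, and $R = Rp \cup C \cup qR$. Writing $U$ for the disjoint union of two out-rays, this says exactly that $R \cong C \cup U$, where $U$ is rooted in the pair $(p,q)$, its two components being attached to $C$ at the two \emph{distinct} vertices $p$ and $q$; conversely, any copy of $C$ together with a copy of $U$ rooted at the images of $(p,q)$ and meeting that copy of $C$ in precisely those two roots forms a copy of $R$. (Taking one extra tail-vertex on each side into $C$, rather than simply $C = sRt$, is what keeps the two attachment points distinct when $R$ has a single turn.)

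Now assume $D$ contains a thick $R$-tribe. By \cref{lem:disjoint_initial_segments} there is a thick $R$-tribe $\F$ in $D$ forked at $C$, so the copies $C'$ of $C$ taken over all members $R'$ of $\F$ are pairwise disjoint. Let $Y$ be the union of the ``interiors'' $V(C') \setminus \{p', q'\}$ over all members $R'$, and pass to $D - Y$. For each member $R'$ the subdigraph $(Rp)' \cup (qR)'$ is a copy of $U$ rooted at $(p', q')$; using that $\F$ is forked at $C$ one checks that it is disjoint from $Y$, hence lies in $D - Y$. Replacing each member $R'$ of $\F$ by this copy of $U$ produces a thick $U$-tribe in $D - Y$, all of whose members are rooted in $X := \{(p', q') : R'\text{ a member of }\F\} \subseteq V(D - Y)^2$. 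Applying \cref{theo:ubiquity_with_starting_set2} with host digraph $D - Y$ yields infinitely many pairwise disjoint copies $U_1, U_2, \dots$ of $U$ in $D - Y$, each rooted at some pair $(p_j, q_j) \in X$; let $C_j$ be the copy of $C$ belonging to the member of $\F$ with root pair $(p_j, q_j)$.

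Finally I would reassemble the copies $P_j := C_j \cup U_j$. Since the $U_j$ are pairwise disjoint, their root pairs — and hence the $C_j$ — are pairwise distinct, hence pairwise disjoint because $\F$ is forked at $C$. As $U_j \subseteq D - Y$, it meets $C_j$ in exactly $\{p_j, q_j\}$, so $P_j \cong R$ by the decomposition above; and for $j \neq k$ one has $C_j \cap C_k = \emptyset$, $U_j \cap U_k = \emptyset$, and $C_j \cap U_k = \emptyset$ (since $U_k$ avoids $Y \supseteq V(C_j) \setminus \{p_j, q_j\}$, while the remaining vertices $p_j, q_j$ lie in $U_j$ and so not in $U_k$), and symmetrically $U_j \cap C_k = \emptyset$; thus the $P_j$ are pairwise disjoint. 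Hence $D$ contains infinitely many disjoint copies of $R$, as required.

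The main obstacle is the disjointness bookkeeping of the last two steps: the copies of $U$ delivered by \cref{theo:ubiquity_with_starting_set2} are only controlled up to being pairwise disjoint and rooted in $X$, so without first deleting the interiors $Y$ they could thread through the cores $C_j$ and destroy disjointness of the assembled $R$-copies. Getting $Y$ right — small enough for the required $U$-tribe to survive in $D - Y$, large enough for the reassembly to be disjoint — together with the degenerate single-turn case, is the only delicate point; everything else is routine.
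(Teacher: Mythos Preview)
Your argument is correct and follows essentially the same route as the paper: isolate a finite core containing all turns as internal vertices, use \cref{lem:change_orientation} to reduce to the case of two out-ray tails, apply \cref{lem:disjoint_initial_segments} to fork at the core, then invoke \cref{theo:ubiquity_with_starting_set2} and reattach the cores. The only cosmetic difference is the host digraph for \cref{theo:ubiquity_with_starting_set2}: the paper restricts to $D'$, the union of the $U$-members themselves, whereas you delete the core interiors $Y$ from $D$; both manoeuvres serve exactly the purpose you identify, namely keeping the returned $U$-copies away from the cores so the reassembly is disjoint, and your explicit disjointness check (including the single-turn case) is if anything more careful than the paper's.
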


\begin{proof}
Let $R$ be a double ray with a (finite) odd number of turns, $D$ a digraph and $\E$ a thick $R$-tribe in $D$.
We show that $D$ contains infinitely many disjoint copies of $R$.
Let $\hat{R}$ be a finite connected subdigraph of $R$ that contains all turns of $R$ as internal vertices.
Since $R$ has an odd number of turns, by deleting the internal vertices of $\hat R$ the digraph $R$ falls apart into a disjoint union $U$ of either two in-rays or two out-rays.
By \cref{lem:change_orientation}, we may assume that the latter is the case.

Next, we apply \cref{lem:disjoint_initial_segments} to $D$, $R$, $\hat{R}$ and $\E$, which yields a thick $R$-tribe $\tribe$ in $D$ that is forked at $\hat{R}$.
Let $\tribe'$ be the $U$-tribe resulting from $\tribe$ by deleting the internal vertices of the copy of $\hat R$ in $R'$ from each member $R'$ of $\tribe$.
Let $D'$ be the union of all members of $\tribe'$.
Further, let $X$ be the set of pairs $(x,y) \in V(D) \times V(D)$ for which there is a member $R'$ of $\tribe$ such that $x R' y$ is the copy of $\hat R$ in $R'$.
This means that any member of $\tribe'$ is rooted in $X$.

Now we apply \cref{theo:ubiquity_with_starting_set2} to $D'$, $\tribe'$ and $X$, which yields an infinite set $\mathcal{U}$ of disjoint copies of $U$ in $D'$ that are rooted in $X$.
We join any $U \in \mathcal{U}$ with $x R' y$, where $U$ is rooted in $(x,y) \in X$ and $R'$ is a member of $\tribe$ such that $x R' y$ is the copy of $\hat R$ in $R'$.
Since $\tribe$ is forked at $\hat R$, this gives an infinite family of disjoint copies of $R$.
\end{proof}

\section{Conclusion}
Finally, we show how to deduce the main \namecref{theo:dray_main_theorem} from the above results.

\DoubleRayUbiquity*

\begin{proof}
The ``if'' direction is immediate from \cref{theo:double_rays_positive}.
For the ``only if'' direction let $R$ be a double ray with at least one turn, but not an odd number of turns.
If $R$ has an even number of turns, then we are done by \cref{theo:dray_even_number_of_turns}.
Otherwise, $R$ has infinitely many turns and $R$ is either non-periodic or periodic.
In the former case $R$ is non-ubiquitous by \cref{theo:dray_negative_non_periodic}.
In the latter case $R$ is non-ubiquitous by \cref{theo:dray_negative_periodic}.
\end{proof}

\medskip

\bibliography{ref}

\end{document}